\documentclass[letter]{amsart}
\usepackage[foot]{amsaddr}

\title[The Euler-Lagrange Equations as the Gradient]{Interpreting the Euler-Lagrange Equations as the Gradient of the Action Functional}

\author[Montek Singh Gill]{Montek Singh Gill}
\email{montekg@umich.edu}

\usepackage[letterpaper,hmargin=1.25in,vmargin=1.2in]{geometry}
\setlength{\parindent}{0 in}

\usepackage{amsmath}
\usepackage{amssymb}
\usepackage{amsthm}

\theoremstyle{theorem}
\newtheorem{Theorem}{Theorem}[section]

\newtheorem{Proposition}[Theorem]{Proposition}

\theoremstyle{definition}
\newtheorem{Definition}[Theorem]{Definition}
\newtheorem{Remark}[Theorem]{Remark}
\newtheorem{Example}[Theorem]{Example}

\newcommand{\N}{\mathbb{N}}
\newcommand{\R}{\mathbb{R}}

\newcommand{\E}{\mathbb{E}}

\newcommand{\Sph}{\mathbb{S}}

\usepackage{tikz}
\usepackage{tikz-cd}
\usepackage{pgfplots}
\usetikzlibrary{decorations.markings}
\usetikzlibrary{arrows,shapes,positioning}
\usetikzlibrary{matrix}
\usetikzlibrary{cd}
\usetikzlibrary{arrows.meta}

\usepackage{listings}
\usepackage{color}

\usepackage{bbm}
\usepackage{mathrsfs}

\usepackage[]{hyperref}

\begin{document}

\subjclass[2010]{57P99, 58A05, 49N99.}

\keywords{diffeological space, tangent space, path space, calculus of variations, Euler-Lagrange equations}

\maketitle

\begin{abstract}
We study the smooth path spaces of Euclidean spaces $\R^N$, as diffeological spaces. We show that the tangent spaces of the free path space $\mathscr{P}$ are isomorphic to $\mathscr{P}$ itself, and that the tangent spaces of the space $\mathscr{P}_{\mathbf{p}, \mathbf{q}}$ of paths with fixed endpoints $\mathbf{p}$ and $\mathbf{q}$ are isomorphic to the smooth loop space of $\R^N$ based at the origin. We also define cotangents and gradients of smooth maps from these path spaces, and then show that, in the case of the action functional which arises in the calculus of variations, the gradient is precisely the path formed out of the terms of the Euler-Lagrange equations. We show that solutions of the Euler-Lagrange equations correspond precisely to the zeros of the gradient, and also provide analogous interpretations for the constrained Euler-Lagrange equations. This gives an illuminating geometric perspective on these equations. Finally, we illustrate the theory with several concrete examples from geometry, mechanics and machine learning.
\end{abstract}

\tableofcontents

\section{Introduction}

In the calculus of variations, and in its applications in mechanics and elsewhere, the Euler-Lagrange equations are a system of second-order ordinary differential equations. In this work, we give an illuminating interpretation of these equations. These equations arise when we look for the stationary points, defined appropriately, of the action functional, a functional which acts on paths in Euclidean space with fixed endpoints. To be more specific, let us fix $a, b \in \R$ with $a < b$, and also points $\mathbf{p}, \mathbf{q} \in \R^N$, for some $N \ge 1$. Let also $L \colon \R^{2N+1} \to \R$ be a smooth map, referred to as the Lagrangian. Then, given any path $\gamma \colon [a,b] \to \R^N$ with $\gamma(a) = \mathbf{p}$ and $\gamma(b) = \mathbf{q}$, the associated action, denoted $S$, is defined as
\[
S := \int_a^b L(\gamma(t), \gamma'(t), t)\,dt
\]
If we let $\mathscr{P}_{\mathbf{p}, \mathbf{q}}$ denote the set of such paths, this gives us a map:
\[
S \colon \mathscr{P}_{\mathbf{p}, \mathbf{q}} \to \R
\]
The Euler-Lagrange equations determine paths $\gamma$ at which $S$ is stationary. There is no obvious smooth structure however on $\mathscr{P}$, but we can still consider perturbations of the path $\gamma$, by adding $h\eta$ for some other path $\eta$ which is zero at the endpoints, and then determine necessary conditions for the stationarity of $S$ under these perturbations. These necessary conditions are precisely the Euler-Lagrange equations
\begin{align*}
\frac{\partial \mathcal{L}}{\partial x_1} &= \frac{d}{dt}\frac{\partial \mathcal{L}}{\partial \dot{x}_1} \\
&\quad\vdots \\
\frac{\partial \mathcal{L}}{\partial x_N} &= \frac{d}{dt}\frac{\partial \mathcal{L}}{\partial \dot{x}_N}
\end{align*}
where, as usual, $x_1, \dots, x_n, \dot{x}_1, \dots, \dot{x}_n, t$, in that order, represent the arguments of $L$. While there is no smooth structure, in the sense of smooth manifolds, on the path spaces of $\R^N$ (as these spaces must clearly be infinite dimensional), they do possess smooth structures in the sense of diffeological spaces. In this work, we endow them with these structures, and then compute their tangent spaces. We find that the tangent spaces of $\mathscr{P}_{\mathbf{p}, \mathbf{q}}$ are isomorphic to the smooth loop space of $\R^N$ based at the origin, while those of $\mathscr{P}$, the free path space of $\R^N$, are isomorphic to $\mathscr{P}$ itself. Next, we define gradients of smooth functions on the path spaces, and find that, in an appropriate sense which generalizes the case of ordinary finite dimensional manifolds, the gradient of the action functional $S$ above, is, as an element of $\mathscr{P}$, precisely the path:
\[
t \mapsto \left(\frac{\partial \mathcal{L}}{\partial x_1} - \frac{d}{dt}\frac{\partial \mathcal{L}}{\partial \dot{x}_1}, \cdots, \frac{\partial \mathcal{L}}{\partial x_N} - \frac{d}{dt}\frac{\partial \mathcal{L}}{\partial \dot{x}_N} \right)
\]
That is, the terms which arise in the Euler-Lagrange equations collectively assemble together to form a tangent to the path space, and this tangent is precisely the gradient of the action functional. Moreover, we show that solutions of the Euler-Lagrange equations correspond precisely to the zeros of the gradient. We also show that analogous interpretations can be made for the constrained Euler-Lagrange equations. \\

In the final section, we give several concrete examples in various scenarios, which provide clarity and intuition for the above theory. They also demonstrate how one can use the direction of the Euler-Lagrange tangent paths to determine whether one has a local minimum or a local maximum.

\section{Path Spaces as Diffeological Spaces}

We begin with diffeological spaces in general, and then discuss the specific spaces of interest to us, namely the path spaces of Euclidean spaces.

\begin{Definition}
A \emph{diffeological space} is a set $X$ together with a specified set $\mathcal{D}_X({\widehat U})$, a subset of the set of functions $\widehat U \to X$, referred to as \emph{plots}, for each open set $\widehat U$ of $\R^n$, for any $n \in \N$, such that:
\begin{itemize}
	\item[(i)] [Covering] For any $\widehat U \subseteq \R^n$, every constant function $\widehat U \to X$ is a plot.
	\item[(ii)] [Smooth compatibility] For all $\widehat U \subseteq \R^n$ and $\widehat V \subseteq \R^m$, if $\widehat U \to X$ is a plot and $\widehat V \to \widehat U$ is smooth, then the composite $\widehat V \to \widehat U \to X$ is also a plot.
	\item[(iii)] [Sheaf condition] For all $\widehat U$ and open covers $\{\widehat U_i\}$ covers of $U$, if $\widehat U \to X$ is a function such that each restriction $\widehat U_i \to X$ is a plot, then $\widehat U \to X$ is a plot.
\end{itemize}
A function $X \to Y$ between diffeological spaces is \emph{smooth} if, for every plot $\widehat U \to X$ of $X$, the composite $\widehat U \to X \to Y$ is a plot of $Y$.
\end{Definition}

Diffeological spaces and smooth maps between them clearly form a category, which is denoted by \textbf{Diff}. We shall denote the hom-sets in $\textbf{Diff}$ by $\mathrm{C}^\infty(X,Y)$. Diffeological spaces generalize the notion of smooth manifolds, as the next result demonstrates.

\begin{Proposition}
\label{prop:smooth_manifolds_as_diffeological_spaces}
Any smooth manifold, with or without boundary, $M$ is a diffeological space with $\mathcal{D}_M(\widehat{U})$ set to be the smooth, in the sense of smooth manifolds, maps $\widehat U \to M$. This produces a functor
\[
\textbf{\emph{SmoothMan}} \to \textbf{\emph{Diff}}
\]
from the category of smooth manifolds to the category of diffeological spaces. Moreover, a function $M \to N$ between smooth manifolds, with our without boundary, is smooth in the sense of diffeological spaces if and only if it is in the sense of smooth manifolds; that is, the functor above is full and faithful.
\end{Proposition}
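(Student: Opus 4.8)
The plan is to prove this proposition in three stages, corresponding to its three assertions: that $M$ is a diffeological space with the stated plots, that the assignment is functorial, and that the functor is full and faithful.

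First I would verify that the smooth maps $\widehat U \to M$ genuinely satisfy the three axioms of a diffeological space. The covering axiom is immediate, since a constant map into a manifold is smooth. The smooth compatibility axiom follows from the fact that the composite of smooth maps between manifolds is smooth: if $\widehat U \to M$ is smooth and $\widehat V \to \widehat U$ is smooth (as a map of open subsets of Euclidean spaces, hence as a map of manifolds), then $\widehat V \to M$ is smooth by the chain rule. The sheaf condition is the observation that smoothness is a local property: a map $\widehat U \to M$ is smooth if and only if its restriction to each member of an open cover is smooth, which is immediate from the definition of smoothness in terms of charts. Some care is warranted in the boundary case, where one must use the standard convention that smoothness on a manifold with boundary means smoothness in charts modeled on the half-space; this does not affect any of the three verifications.

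Next I would establish functoriality. A smooth map $f \map{M}{N}$ induces, by postcomposition, a function sending any plot $\widehat U \to M$ of $M$ to the composite $\widehat U \to M \to N$; this composite is smooth as a map of manifolds, hence is a plot of $N$, so $f$ is smooth in the diffeological sense. Since postcomposition respects identities and composition, this assignment is a functor $\textbf{SmoothMan} \to \textbf{Diff}$ on the nose.

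The substance of the proposition is the final claim, that the functor is full and faithful, which amounts to showing that $f \map{M}{N}$ is smooth as a map of manifolds if and only if it is smooth as a map of diffeological spaces. Faithfulness is trivial since the functor is the identity on underlying functions. For fullness, one direction is the functoriality just established: a manifold-smooth map is diffeologically smooth. The reverse direction is the key step, and the main obstacle. Suppose $f \map{M}{N}$ is smooth in the diffeological sense; I must produce manifold-smoothness. The idea is to test $f$ against enough plots of $M$. Concretely, smoothness of $f$ at a point $x \in M$ can be checked in a chart, so choose a chart $\varphi \map{\widehat U}{M}$ with $\widehat U \subseteq \R^n$ open and $x$ in its image; this chart, being a diffeomorphism onto an open subset of $M$, is in particular smooth as a map of manifolds, hence is a plot of $M$. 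By diffeological smoothness of $f$, the composite $f \circ \varphi \map{\widehat U}{N}$ is a plot of $N$, that is, a smooth map of manifolds. Since $\varphi$ is a diffeomorphism onto its image, $f$ agrees locally with $(f \circ \varphi) \circ \varphi^{-1}$, a composite of smooth maps, and is therefore smooth near $x$. As $x$ was arbitrary, $f$ is smooth on all of $M$, establishing fullness and completing the proof. The only delicacy is the boundary case, where one uses charts into the half-space and invokes the corresponding notion of smoothness, but the argument is otherwise identical.
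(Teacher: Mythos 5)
Your verification of the diffeology axioms, the functoriality, the faithfulness, and the fullness argument at interior points are all correct, and for manifolds \emph{without} boundary your proof is complete. (The paper itself gives no argument at all --- it simply cites the standard reference --- so what you have written fills in the no-boundary part of what is being cited.)

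There is, however, a genuine gap in the boundary case, precisely at the step you dismiss with ``the argument is otherwise identical.'' The paper's definition of a diffeological space requires every plot to be defined on an \emph{open subset of $\R^n$}. A chart around a boundary point of $M$ is defined on an open subset of the half-space $\mathbb{H}^n = \{x \in \R^n : x_n \ge 0\}$, and such a set is not open in $\R^n$ whenever it meets $\partial \mathbb{H}^n$. Consequently a boundary chart $\varphi$ is \emph{not} a plot, diffeological smoothness of $f$ says nothing directly about $f \circ \varphi$, and your fullness argument cannot even begin at boundary points: it is not ``identical,'' it is unavailable. What is actually needed is the following nontrivial statement: if $\widehat U$ is open in $\mathbb{H}^n$ and $g \colon \widehat U \to \R^m$ composes smoothly with every smooth map from an open subset of a Euclidean space into $\widehat U$, then $g$ is smooth in the manifold-with-boundary sense, i.e.\ locally extends to a smooth map on an open subset of $\R^n$. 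The standard proof precomposes $g$ with the folding map $\sigma(x_1, \dots, x_n) = (x_1, \dots, x_{n-1}, x_n^2)$, which \emph{is} a legitimate plot since its domain is open in $\R^n$; then $g \circ \sigma$ is smooth and even in $x_n$, and Whitney's even-function theorem (in its parametrized form) yields a smooth $h$ on an open subset of $\R^n$ with $g(\sigma(x)) = h(x_1, \dots, x_{n-1}, x_n^2)$, so that $h$ is the required local smooth extension of $g$. Without this input (or an explicit appeal to the literature, which is the route the paper takes), the ``only if implies manifold-smooth'' direction of the final claim remains unproved for manifolds with boundary --- and the statement explicitly includes them.
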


\begin{proof}
This is standard. See, e.g., \cite{DiffeologyBook}.
\end{proof}

Note that if $X$ is a diffeological space and $p \colon \widehat U \to X$ is a plot, then $p$ is itself smooth in the sense of diffeological spaces, where $\widehat U$ has the diffeological structure described in the result above. \\

A diffeological space, being a space, ought to carry a natural topology, and it does. This is the topology that consists of the D-open subsets, where a subset $U$ of $X$ is \textit{D-open} if, for every plot $p \colon \widehat U \to X$, $p^{-1}(U)$ is open. We call this the \textit{D-topology} and denote it by $\mathcal{T}(\mathcal{D}_X)$. \\

The next result shows that, categorically, diffeological spaces behave much better than smooth manifolds, and even better than topological spaces.

\begin{Proposition}
The category \textbf{\emph{Diff}} is complete, cocomplete and cartesian closed.
\end{Proposition}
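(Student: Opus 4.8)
The plan is to work over the forgetful functor $U \colon \textbf{Diff} \to \textbf{Set}$, which is faithful, and to build every limit and colimit by first forming the corresponding (co)limit of underlying sets and then equipping it with a canonical coarsest or finest compatible diffeology. (In the background this is the statement that \textbf{Diff} is equivalent to the category of concrete sheaves on the site of Euclidean open sets, a quasitopos, but the direct construction is more transparent.) For completeness, given a small diagram $D \colon J \to \textbf{Diff}$, I would form the limit $\varprojlim U D$ in $\textbf{Set}$ with its projections $\pi_j$, and impose the \emph{initial diffeology}: a map $p \colon \widehat U \to \varprojlim U D$ is declared a plot precisely when every composite $\pi_j \circ p$ is a plot of $D(j)$. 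Each diffeology axiom (covering, smooth compatibility, sheaf condition) is then inherited "pointwise in $j$" from the $D(j)$, and the universal property of the set-level limit upgrades at once to a universal cone in $\textbf{Diff}$, since a cone is smooth if and only if all its legs are. This subsumes products (componentwise plots) and equalizers (the induced diffeology on a subset).

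For cocompleteness I would dualize: form $\varinjlim U D$ in $\textbf{Set}$ with coprojections $\iota_j$, and impose the \emph{final diffeology}, the finest one making every $\iota_j$ smooth. Here the description of plots is less immediate: $p \colon \widehat U \to \varinjlim U D$ is a plot if and only if it is \emph{locally} of the form $\iota_j \circ q$ for some plot $q$ of some $D(j)$, meaning there is an open cover of $\widehat U$ on which $p$ restricts to such composites. The locality is exactly what is needed to secure the sheaf condition (iii); without it the candidate family would fail axiom (iii). One then checks the covering and smooth-compatibility axioms and the universal property, yielding coproducts (disjoint unions) and coequalizers (quotient diffeologies) as special cases.

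For cartesian closedness I would equip the set $\mathrm{C}^\infty(X, Y)$ of smooth maps with the \emph{functional diffeology}: $p \colon \widehat U \to \mathrm{C}^\infty(X,Y)$ is a plot if and only if its uncurried form $\widehat U \times X \to Y$, $(u, x) \mapsto p(u)(x)$, is smooth. After verifying the axioms, the heart of the matter is the exponential law, the natural isomorphism
\[
\mathrm{C}^\infty(Z \times X,\, Y) \;\cong\; \mathrm{C}^\infty\big(Z,\, \mathrm{C}^\infty(X, Y)\big),
\]
implemented by currying $f \mapsto \widehat f$, where $\widehat f(z)(x) = f(z,x)$. I would prove it by unwinding both sides against the relevant diffeologies: a plot of the product $Z \times X$ is exactly a map $u \mapsto (r(u), s(u))$ with $r$ and $s$ plots of $Z$ and $X$ on a common domain, so smoothness of $f$ says precisely that $u \mapsto f(r(u), s(u))$ is always a plot of $Y$; chasing the functional diffeology through shows that smoothness of $\widehat f$ reduces to the identical condition. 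Hence currying is a bijection, visibly natural in all variables.

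I expect two main obstacles. In the colimit construction, confirming that the locally-factored maps form a genuine diffeology — in particular that they are closed under the sheaf axiom and really constitute the finest diffeology making the coprojections smooth — requires gluing local factorizations that may pass through different objects $D(j)$, and is the least formal step. For cartesian closedness, the essential content is the reduction noted above, that smoothness of a map out of a product is detected on the diagonal plots $u \mapsto (r(u), s(u))$; granting this, the exponential adjunction and its naturality follow formally, and together with the limit and colimit constructions this shows that \textbf{Diff} is complete, cocomplete and cartesian closed.
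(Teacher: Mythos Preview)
Your proposal is correct and follows the standard route one finds in the literature (initial diffeologies for limits, final diffeologies for colimits, and the functional diffeology for internal homs, with the exponential law verified by unwinding plots of a product). The paper, however, does not give a proof at all: it simply declares the result standard and cites Iglesias-Zemmour's \emph{Diffeology}. So there is nothing to compare against beyond noting that your sketch is precisely the argument one would find upon following that citation; in that sense you have supplied what the paper omits.
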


\begin{proof}
This is also standard. See, e.g., \cite{DiffeologyBook}.
\end{proof}

Here are some example constructions:
\begin{itemize}
	\item Given a diffeological space $X$ and a subset $A$, $A$ becomes a \textit{diffeological subspace} with the following subspace diffeology: a function $\widehat U \to A$ is a plot of $A$ if and only if the composite $\widehat U \to A \to X$ is a plot of $X$; that is, the plots are exactly those of $X$ which have image in $A$. Given a diffeological space $X$ and a diffeological subspace $A$, there are two topologies that $A$ can inherit: the D-topology $\mathcal{T}(\mathcal{D}_A)$ from its diffeological structure, and the subspace topology from the D-topology $\mathcal{T}(\mathcal{D}_X)$ of $X$. In general, these two do not coincide. If they do, we say that $A$ is an \textit{embedded} diffeological subspace.
	\item Given diffeological spaces $X$ and $Y$, the \textit{diffeological product space} $X \times Y$ is the set-level product together with the following diffeology: a function $\widehat U \to X \times Y$ is a plot if and only if the component functions $\widehat U \to X$ and $\widehat U \to Y$ are plots of $X$ and $Y$, respectively.
	\item Given diffeological spaces $X$ and $Y$, the \textit{diffeological internal hom} of $X$ and $Y$ is $\mathrm{C}^\infty(X,Y)$ with the following diffeology: a function $\widehat U \to \mathrm{C}^\infty(X,Y)$ is a plot if and only if the adjoint function $\widehat U \times X \to Y$ is smooth (with respect to the product diffeology on $\widehat U \times X$). We shall denote the internal homs by $[X,Y]$. Note that, given diffeological spaces $X$, $Y$ and $Z$, and smooth maps $f \colon X \to Y$, $g \colon Y \to Z$, the precomposition and postcomposition maps $- \circ f \colon [Y,Z] \to [X,Z]$ and $g \circ - \colon [X,Y] \to [X,Z]$ are smooth maps. This follows from a purely formal argument using cartesian closure.
\end{itemize}

Now we can introduce the specific diffeological spaces of interest to us. Fix some natural number $N \ge 1$, real numbers $a, b$ with $a < b$ and points $\mathbf{p}, \mathbf{q} \in \R^N$. Consider the set of smooth paths $[a,b] \to \R^N$.

\begin{Definition}
We let $\mathscr{P}$ denote the internal hom $[[a.b], \R^N]$, the diffeological space of smooth paths $\gamma \colon [a,b] \to \R^N$. We also let $\mathscr{P}_{\mathbf{p}, \mathbf{q}}$ denote the diffeological subspace of those paths where $\gamma(a) = \mathbf{p}$ and $\gamma(b) = \mathbf{q}$.
\end{Definition}

By definition of the internal hom and subspace diffeologies, a function $\widehat U \to \mathscr{P}$, or $\widehat U \to \mathscr{P}_{\mathbf{p}, \mathbf{q}}$, is a plot exactly when the adjoint function $\widehat U \times [a,b] \to \R^N$ is smooth. The next couple examples illustrate that some facts which one would expect to be true of a smooth structure on $\mathscr{P}$ are indeed true with the diffeological structure defined above. 

\begin{Example}
\label{ex:D_topology_of_path_space}
Let $\gamma \in \mathscr{P}$ and let $U$ be a D-open subset of $\mathscr{P}$. If $\eta \in \mathscr{P}$ is another path, one would expect that, for sufficiently small $\varepsilon$, $\gamma + \varepsilon\eta$ is also in $U$. This is indeed true. To see this, set $\widehat U = (-\varepsilon, \varepsilon)$ and consider the map $(-\varepsilon, \varepsilon) \times [a,b] \to \R^N : (h,t) \mapsto \gamma(t) + h\eta(t)$. This is a smooth map, and so, by the definition of the diffeology of $\mathscr{P}$, we have an associated plot $\widehat U \to \mathscr{P} \colon h \mapsto \gamma + h\eta$. As $U$ is D-open, the pre-image of $U$ under this plot is open in $\widehat U$. The preimage contains $0$ as $0$ maps to $\gamma$, and so a neighbourhood of $0$ is in the preimage, which gives us the desired result.
\end{Example}

\begin{Example}
\label{ex:smooth_function_on_path_space}
Fix some path $\gamma \in \mathscr{P}$ and consider the following function:
\[
f \colon \mathscr{P} \to \R : \alpha \mapsto \int_a^b \gamma(t) \cdot \alpha(t)\,dt
\]
One would expect this to be a smooth function on the path space, and this is indeed true. To see this, let $p \colon \widehat{U} \to \mathscr{P}$ be a plot; we need to show that the composite $f \circ p : \widehat{U} \to \mathscr{P} \to \R$ is smooth. By definition of the diffeology of $\mathscr{P}$, the adjoint $\widetilde p \colon \widehat{U} \times [a,b] \to \R^N$ is smooth. Moreover, the composite $f \circ p$ is given by
\[
\widehat{U} \to \R : \mathbf{u} \mapsto \int_a^b \gamma(t) \cdot \widetilde p(\mathbf{u}, t)\,dt
\]
and, knowing that $\widetilde p$ is smooth, this is indeed smooth. By a similar argument, if $L \colon \R^{2N+1} \to \R$ is smooth, then the following is a smooth function on paths:
\[
\mathscr{P} \to \R : \alpha \mapsto \int_a^b L(\alpha(t), \alpha'(t), t)\,dt
\]
\end{Example}

One more note before moving on. It is clear that the path spaces $\mathscr{P}$ and $\mathscr{P}_{\mathbf{p}, \mathbf{q}}$ are infinite dimensional. In fact, we can think of the inclusion
\[
\mathscr{P}_{\mathbf{p}, \mathbf{q}} \hookrightarrow \mathscr{P}
\]
as an infinte dimensional variant of the inclusion
\[
H \hookrightarrow \R^{n+k}
\]
where $n, k > 0$ and $H$ is the affine subspace $\{(x_1,\dots,x_{n+k}) \mid x_{n+1} = c_{n+1}, \dots, x_{n+k} = c_{n+k}\}$ for some constants $c_{n+1}, \dots, c_{n+k}$. We will use this later.

\section{Tangent Spaces of Path Spaces}

Our goal now is to define tangent spaces to our path spaces. Once more, we must first discuss general diffeological theory, and then consider our specific spaces. Note that it is not obvious how one should define tangent spaces for general diffeological spaces. In \cite{ChristensenWu}, Christensen and Wu compare and discuss several approaches. We shall adopt a definition which is intermediate between what are called ``internal tangent spaces'' and ``external tangent spaces'' in \cite{ChristensenWu}; in fact, Christensen and Wu, in a section on alternative approaches, mention another approach which we shall show later to be exactly equivalent to the approach that we take below. Our approach is also very close to the approach used by Vincent in \cite{Vincent}. Moreover, our approach satisfies the basic requirements that any reasonable notion of tangent space should satisfy (see, e.g., \cite{Vincent} for a list of such requirements).

\subsection{Tangent Spaces of Diffeological Spaces}

First, we must define spaces of germs of smooth functions.

\begin{Definition}
Let $X$ be a diffeological space and $x \in X$. The \emph{germs of smooth functions of $X$ at $x$}, denoted $\mathrm{G}_xX$, is defined as
\[
\mathrm{G}_xX := \underset{U \, \ni \, x}{\mathrm{colim}} \, [U, \R]
\]
where the colimit is taken in diffeological spaces, $U$ runs over all D-open subsets, equipped with sub-diffeologies, of $X$ which contain $x$, and the maps in the colimit are restrictions along inclusions.
\end{Definition}

The space of germs $\mathrm{G}_xX$ is in fact a diffeological $\R$-algebra and can be described concretely as follows:
\begin{itemize}
	\item The underlying set consists of pairs $(U, f)$ where $U$ is a D-open neighbourhood of $x$ and $f$ is a smooth map $U \to \R$, taken under the equivalence relation where $(U, f) \sim (V, g)$ if and only if $f = g$ on some D-open neighbourhood of $x$ inside $U \cap V$. We typically denote the equivalence class $[(U,f)]$ by just $[f]$.
	\item The diffeology is as follows: a function $p \colon \widehat U \to \mathrm{G}_xX$ is a plot if and only if, for each $\mathbf{u} \in \widehat U$ there is some $\widehat V \subseteq \widehat U$ containing $\mathbf{u}$ and a D-open $U$ containing $x$ such that the restriction $p|_{\widehat V} \colon \widehat V \to \mathrm{G}_xX$ filters as $\widehat V \to [U, \R] \to G_xX$ for a plot $\widehat V \to [U,\R]$ of $[U,\R]$.
	\item Addition, multiplication and scalar multiplication are defined as:
\[
c[(U,f)] = (U, cf)
\]
\[
[(U,f)] + [(V,g)] = (U \cap V, f+g)
\]
\[
[(U,f)] \cdot [(V,g)] = (U \cap V, fg)
\]
These are clearly well-defined and an easy verification shows that they are also smooth. Moreover, the evaluation map $\mathrm{G}_xX \to \R$ sending $[f]$ to $f(x)$ is a well-defined smooth $\R$-algebra map. 
\end{itemize}

Now we can define tangent vectors, and tangent spaces, of diffeological spaces.

\begin{Definition}
\label{def:tangent_space_to_diffeological_space}
Let $X$ be a diffeological space and let $x \in X$. A \emph{tangent vector on $X$ at $x$} is a smooth derivation on $\mathrm{G}_xX$ which is representable by a path. That is, it is a map $v \colon \mathrm{G}_xX \to \R$ such that:
\begin{itemize}
	\item[(i)] [Linearity] $v$ satisfies $v(\lambda[f] + \mu[g]) = \lambda v([f] + \mu v([g])$.
	\item[(ii)] [Leibniz rule] $v$ satisfies $v([f][g]) = v([f])g(x) + f(x)v([g])$.
	\item[(iii)] [Smoothness] $v$ is smooth in the sense of diffeological spaces.
	\item[(iv)] [Representability] $v$ is a linear combination $\lambda_1 v_1 + \cdots \lambda_k v_k$ of derivations $v_i$ where, for each $i$, there exists a plot $p_i \colon (-\varepsilon_i, \varepsilon_i) \to X$, for some $\varepsilon_i > 0$, of $X$ such that $p_i(0) = x$ and $v_i([f]) = \frac{d}{dh}\Big\vert_{h=0}f(p_i(h))$.
\end{itemize}
The \emph{tangent space} $\mathrm{T}_x X$ is the set of all tangent vectors of $X$ at $x$. Clearly $\mathrm{T}_x X$ is a vector space under pointwise addition and pointwise scalar multiplication.
\end{Definition}

Given a smooth map $F \colon X \to Y$ between two diffeological spaces, at any point $x \in X$, we have an induced linear map $F_*$ between the two tangent spaces
\[
F_* \colon \mathrm{T}_x X \to \mathrm{T}_{F(x)} Y
\]
where $F_*(v)([f]) = F([f \circ F])$ for $v \in \mathrm{T}_x X$ and $[f] \in \mathrm{G}_{F(x)}Y$ (here, by $f \circ F$, we of course really mean $f \circ F|_{F^{-1}(\mathrm{dom}(f))}$). It is clear that $F_*(v)$ satisfies linearity, the Leibniz rule and representability. To see smoothness, we can express $F_*$ in another way. By precomposition with $F$, we get an induced smooth map $\mathrm{G}_{F(x)}Y \to \mathrm{G}_xX$, and the map $F_*$ is simply precomposition with this map. This makes it clear that $F_*(v)$ is indeed smooth if $v$ is. \\

Our first result on tangent spaces to diffeological spaces demonstrates that they are local objects.

\begin{Proposition}
\label{prop:tangent_spaces_are_local}
Let $X$ be a diffeological space and $A$ an embedded D-open diffeological subspace. For any $a \in A$, the inclusion map $i \colon A \to X$ induces an isomorphism:
\[
i_* \colon \mathrm{T}_aA \overset{\cong}\longrightarrow \mathrm{T}_aX
\]
\end{Proposition}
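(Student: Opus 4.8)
The plan is to reduce everything to a statement about germs. Recall from the construction of $i_*$ that it is precomposition with the induced pullback map on germs: the inclusion $i \colon A \to X$ gives a smooth $\R$-algebra map $i^* \colon \mathrm{G}_aX \to \mathrm{G}_aA$, $[f] \mapsto [f \circ i]$ (restriction of functions to $A$), and $i_*(v) = v \circ i^*$ for $v \in \mathrm{T}_aA$. Thus the whole result will follow once I show that $i^*$ is an \emph{isomorphism} of diffeological $\R$-algebras, together with a separate check that the representability condition (condition (iv) of Definition \ref{def:tangent_space_to_diffeological_space}) is respected in both directions.

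The technical heart is the following observation about D-open subspaces, which I would prove first. Since $A$ is D-open in $X$, for a subset $W \subseteq A$ one has: $W$ is D-open in $A$ (in the subspace diffeology) if and only if $W$ is D-open in $X$. The direction ``$A$ to $X$'' uses that any plot $q \colon \widehat U \to X$ meets $A$ in the open set $q^{-1}(A)$ and that $q$ restricted there is a plot of $A$ by the subspace diffeology, while the reverse direction is immediate since plots of $A$ are plots of $X$. One also checks, by transitivity of the subspace diffeology, that for $V \subseteq A$ the diffeology $V$ inherits from $A$ agrees with the one it inherits from $X$, so ``smooth on $V$'' is unambiguous.

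With this in hand, $i^*$ is an isomorphism. For \emph{surjectivity}, any germ on $A$ is represented by some $(V,g)$ with $V$ D-open in $A$; by the observation $V$ is already D-open in $X$ and $g$ is smooth on it as a subspace of $X$, so $(V,g)$ represents a germ on $X$ restricting to the given one. For \emph{injectivity}, if $f_1 \circ i$ and $f_2 \circ i$ agree on a D-open neighbourhood $W \subseteq A$ of $a$, then $W$ is D-open in $X$, so $f_1$ and $f_2$ already define the same germ on $X$. That $i^*$ is an algebra map, and that it and its inverse are smooth, is then a formal check on the explicit descriptions of the germ algebras and their diffeologies.

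Once $i^*$ is an algebra isomorphism, the assignment $v \mapsto v \circ i^*$ is a linear bijection between functionals on $\mathrm{G}_aA$ and on $\mathrm{G}_aX$ that carries the Leibniz rule and smoothness across in both directions, these transferring formally through the isomorphism and the compatible evaluation maps. The one genuinely non-formal step, which I expect to be the main obstacle, is matching up representability. Given $v \in \mathrm{T}_aA$ represented by plots $p_i \colon (-\varepsilon_i,\varepsilon_i) \to A$, the composites $i \circ p_i$ are plots of $X$ through $a$ representing $i_*(v)$, so the forward direction is clean. For the reverse, given $w \in \mathrm{T}_aX$ represented by plots $q_i \colon (-\varepsilon_i,\varepsilon_i) \to X$ with $q_i(0) = a$, I would use D-openness of $A$: each $q_i^{-1}(A)$ is open and contains $0$, so restricting $q_i$ to a small interval $(-\delta_i,\delta_i)$ yields a plot landing in $A$, hence a plot of $A$, and these represent the unique preimage $v = w \circ (i^*)^{-1} \in \mathrm{T}_aA$, for which $i_*(v) = w$. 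Injectivity of $i_*$ follows since $i^*$ is surjective, and so $i_*$ is an isomorphism.
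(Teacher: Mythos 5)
Your proof is correct, and its skeleton is the same as the paper's: establish that the D-open subsets of $A$ are exactly the D-open subsets of $X$ contained in $A$, deduce that $i^* \colon \mathrm{G}_aX \to \mathrm{G}_aA$ is an isomorphism of diffeological $\R$-algebras, and transfer derivations across it. There are, however, two genuine differences worth recording. First, the paper derives the topological lemma from \emph{both} hypotheses (embeddedness and D-openness), whereas you derive it from D-openness alone: your restriction argument (for a plot $q$ of $X$, the set $q^{-1}(A)$ is open and $q|_{q^{-1}(A)}$ is a plot of $A$) in effect proves that a D-open subspace is automatically embedded, so your argument shows the embeddedness hypothesis in the statement is redundant. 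Second, the paper compresses the final step into ``the desired result then clearly follows,'' but this step is not purely formal: tangent vectors in the sense of Definition \ref{def:tangent_space_to_diffeological_space} are required to be representable, and representability refers to plots rather than to the germ algebra, so it is not transported by an abstract algebra isomorphism alone. Your explicit two-directional check --- composing representing plots with $i$ in one direction, and in the other direction restricting representing plots $q_i$ of $X$ to small intervals inside $q_i^{-1}(A)$ (again using D-openness) so that they become plots of $A$ --- is exactly what is needed to justify that sentence. What the paper's route buys is brevity; what yours buys is a marginally stronger statement and a complete verification of the one condition that does not follow formally from the isomorphism of germ algebras.
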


\begin{proof}
Because $A$ is embedded and D-open, a subset $U$ of $A$ is D-open in $A$ if and only if it is D-open in $X$. From this, it follows easily that the induced map $\mathrm{G}_aX \to \mathrm{G}_aA$ is an isomorphism of diffeological $\R$-algebras. The desired result then clearly follows from this.
\end{proof}

The following result shows that the tangent spaces to Euclidean spaces are as expected.

\begin{Proposition}
\label{prop:tangent_spaces_for_smooth_manifolds}
We have the following:
\begin{itemize}
	\item[\emph{(i)}] For every open $\widehat U$ in $\R^n$, and $\mathbf{p} \in \widehat U$, we have a natural isomorphism
\[
\mathrm{D} \colon \R^n \overset{\cong}\longrightarrow \mathrm{T}_{\mathbf{p}}\widehat U
\]
where $\mathrm{D}\mathbf{v}$ is the derivation $[f] \mapsto \frac{d}{dh}f(\mathbf{p} + h\mathbf{v})$.
	\item[\emph{(ii)}] For any smooth $n$-manifold $M$ and $p \in M$, we have that:
\[
\mathrm{T}_pM \cong \R^n
\]
In fact, as a real vector space, $\mathrm{T}_pM$ is exactly equal to the tangent space in the sense of smooth manifolds.
\end{itemize}
\end{Proposition}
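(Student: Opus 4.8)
The plan is to prove (i) directly and then deduce (ii) from it using the locality of tangent spaces established in Proposition \ref{prop:tangent_spaces_are_local}. The first thing I would do is record the simplifications afforded by Proposition \ref{prop:smooth_manifolds_as_diffeological_spaces}: since $\widehat U$ carries the manifold diffeology, its D-open subsets are exactly the ordinary open subsets, and a diffeologically smooth map out of any such subset is exactly an ordinary smooth map. Consequently the germ algebra $\mathrm{G}_{\mathbf{p}}\widehat U$ is precisely the classical algebra of germs at $\mathbf{p}$ of smooth $\R$-valued functions, and every plot $p_i \colon (-\varepsilon_i, \varepsilon_i) \to \widehat U$ is an ordinary smooth path with $p_i(0) = \mathbf{p}$. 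This reduction is what lets me invoke the classical chain rule in the computations below.

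For part (i), I would first check that $\mathrm{D}$ is well-defined: each $\mathrm{D}\mathbf{v}$ satisfies linearity and the Leibniz rule because it is an ordinary directional derivative, it is representable by the straight-line plot $h \mapsto \mathbf{p} + h\mathbf{v}$ (a plot precisely because $\widehat U$ has the manifold diffeology), and smoothness with respect to the colimit diffeology on $\mathrm{G}_{\mathbf{p}}\widehat U$ follows by unwinding the concrete description of that diffeology. Linearity of $\mathbf{v} \mapsto \mathrm{D}\mathbf{v}$ is immediate, and injectivity is obtained by evaluating $\mathrm{D}\mathbf{v}$ on the germs of the coordinate functions $x_1, \dots, x_n$, which returns the components $v_1, \dots, v_n$. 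Surjectivity is where the representability clause (iv) of Definition \ref{def:tangent_space_to_diffeological_space} does the essential work: given $v = \lambda_1 v_1 + \cdots + \lambda_k v_k$ with each $v_i$ arising from a plot $p_i$, the chain rule gives $v_i([f]) = \frac{d}{dh}\big\vert_{h=0} f(p_i(h)) = \mathrm{D}(p_i'(0))([f])$, so that $v_i = \mathrm{D}(p_i'(0))$ and hence $v = \mathrm{D}(\lambda_1 p_1'(0) + \cdots + \lambda_k p_k'(0))$ by linearity of $\mathrm{D}$. Naturality then reduces to a single chain-rule computation: for a smooth map $F$ of open sets one checks that $F_*(\mathrm{D}\mathbf{v}) = \mathrm{D}(dF_{\mathbf{p}}\mathbf{v})$, where $dF_{\mathbf{p}}$ is the ordinary Jacobian.

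For part (ii), I would choose a chart $\phi \colon W \to \widehat U \subseteq \R^n$ around $p$, with $W$ open in $M$. An open subset of a manifold is an embedded D-open diffeological subspace, so Proposition \ref{prop:tangent_spaces_are_local} gives an isomorphism $\mathrm{T}_pW \cong \mathrm{T}_pM$, while $\phi$ is a diffeomorphism and hence induces an isomorphism $\phi_* \colon \mathrm{T}_pW \to \mathrm{T}_{\phi(p)}\widehat U$; composing with (i) yields $\mathrm{T}_pM \cong \R^n$. To obtain the sharper claim that $\mathrm{T}_pM$ is \emph{equal} to the classical tangent space, I would observe that both are realized as sets of derivations on the same germ algebra $\mathrm{G}_pM$, and that the image of $\mathrm{D}$ is exactly the set of geometric velocity derivations; the content is that every element of the classical tangent space is representable and smooth, which is precisely what the surjectivity argument of (i) establishes after transport through the chart.

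The main obstacle I anticipate is not any single hard estimate but rather the bookkeeping of diffeological smoothness: verifying that each $\mathrm{D}\mathbf{v}$ is genuinely smooth as a map $\mathrm{G}_{\mathbf{p}}\widehat U \to \R$ with respect to the colimit diffeology, and confirming that the representability clause is preserved under the chart identification used in (ii). Everything else collapses, via Proposition \ref{prop:smooth_manifolds_as_diffeological_spaces}, to the classical chain rule and the standard identification of the manifold tangent space.
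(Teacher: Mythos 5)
Your part (i) is correct, and its surjectivity step is genuinely different from the paper's. The paper never uses the representability clause there: it checks that each $\mathrm{D}\mathbf{v}$ is a smooth representable derivation and then quotes the classical structure theorem (every derivation on the algebra of germs at a point of $\R^n$ is a directional derivative) to conclude that the smooth representable derivations already exhaust \emph{all} derivations. You instead take a tangent vector $v = \lambda_1 v_1 + \cdots + \lambda_k v_k$ as in clause (iv) of Definition \ref{def:tangent_space_to_diffeological_space}, apply the chain rule to the defining plots, and get $v = \mathrm{D}\bigl(\lambda_1 p_1'(0) + \cdots + \lambda_k p_k'(0)\bigr)$. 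Your route is more elementary (no Hadamard-type lemma), but note the trade-off: it proves only $\mathrm{D}\colon \R^n \cong \mathrm{T}_{\mathbf{p}}\widehat U$, whereas the paper's route proves the stronger statement that the inclusion $\mathrm{T}_{\mathbf{p}}\widehat U \subseteq \mathrm{T}^{\text{man}}_{\mathbf{p}}\widehat U$ is itself an isomorphism. Note also that you defer the one verification that occupies most of the paper's proof of (i), namely that each $\mathrm{D}\mathbf{v}$ is smooth for the colimit diffeology on $\mathrm{G}_{\mathbf{p}}\widehat U$ (the paper reduces to partial derivatives, takes a plot $\widehat W \to [\widehat V, \R]$, and differentiates the smooth adjoint); calling this ``bookkeeping'' is acceptable in a sketch, but it is the technical heart of (i), not a routine unwinding.

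The genuine gap is in (ii), in the claim that $\mathrm{T}_pM$ is \emph{equal} to the classical tangent space. What must be shown is that an arbitrary derivation on $\mathrm{G}_pM$ --- with no smoothness or representability assumed --- is automatically smooth and representable. You assert this is ``precisely what the surjectivity argument of (i) establishes after transport through the chart,'' but it is not: your surjectivity argument takes smoothness and representability as hypotheses (it starts from the decomposition supplied by clause (iv)) and concludes that such a derivation is a velocity derivation. It cannot be run on a derivation not already known to lie in $\mathrm{T}_{\mathbf{p}}\widehat U$; the quantifiers go the wrong way. The missing ingredient is exactly the classical theorem your version of (i) avoided: every derivation on the germ algebra at $\mathbf{p} \in \R^n$ is of the form $\mathrm{D}\mathbf{v}$ for some $\mathbf{v}$; combined with your well-definedness step (each $\mathrm{D}\mathbf{v}$ is smooth and representable), this closes the argument. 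This is in effect how the paper proceeds: its (i) yields $\mathrm{T}_{\mathbf{p}}\widehat U = \mathrm{T}^{\text{man}}_{\mathbf{p}}\widehat U$, and its (ii) is a diagram chase transporting that equality through the chart and locality isomorphisms, concluding that derivations on manifolds are automatically representable and smooth. So your (ii) is fixable by one added invocation of the classical theorem, but as written its key claim is justified by the wrong lemma.
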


\begin{proof}
(i): An easy check from the definitions shows that the D-topology on $\widehat U$ is equivalent to the standard Euclidean topology, and so the colimit in constructing the space of germs $\mathrm{G}_{\mathbf{p}}\widehat U$ is in fact one over all open $U$ in $\widehat U$ containing $\mathbf{p}$. Moreover, by Proposition \ref{prop:smooth_manifolds_as_diffeological_spaces}, given an open $\widehat V$ in $\widehat U$, a map $\widehat V \to \R$ is smooth in the sense of diffeological spaces if and only if it is smooth in the Euclidean sense. Now, by the analogous result from smooth manifold theory for the tangent spaces to $\widehat U$, we know that each $\mathrm{D}\mathbf{v}$ is in fact a derivation on $\mathrm{G}_{\mathbf{p}}\widehat U$. It also clearly representable, using the path $h \mapsto \mathbf{p} \to h\mathbf{v}$. We claim that each $\mathrm{D}\mathbf{v}$ is also automatically smooth; at which point the desired result follows from the analogous result from smooth manifold theory. To see that $\mathrm{D}\mathbf{v}$ is smooth, first, note that any function $v \colon \mathrm{G}_{\mathbf{p}}\widehat U \to \R$ is smooth if and only if each restriction $[\widehat V, \R] \to \R$, for open neighbourhoods $\widehat V$ of $\mathbf{p}$ contained in $\widehat U$, is smooth. Because directional derivatives may be written as a linear combinations of partial derivatives, without loss of generality, we may assume that $\mathbf{v}$ is the $i^{\text{th}}$ standard basis vector of $\R^n$, for $i = 1, \dots, n$. In this case, the restriction $[\widehat V, \R] \to \R$ of $\mathrm{D}\mathbf{v}$ is the partial derivative operator $f \mapsto (\partial_if)(\mathbf{p})$. Let $p \colon \widehat W \to [\widehat V, \R]$ be a plot, for some open $\widehat W \subseteq \R^m$. We need to show that the composite $(\partial_i-)(\mathbf{p}) \circ p \colon \widehat W \to [\widehat V, \R] \to \R$ is smooth. As $p$ is a plot, we know that the adjoint $\widetilde{p} \colon \widehat W \times \widehat V \to \R$ is smooth. Then $\partial_{m+i}\widetilde{p} \colon \widehat W \times \widehat V \to \R$ is also smooth. Finally, it is clear that $(\partial_{m+i}\widetilde p)(\mathbf{w}, \mathbf{p}) = (\partial_ip(\mathbf{w}))(\mathbf{p})$, so that $(\partial_i-)(\mathbf{p}) \circ p$ is the composite $\widehat W \to \widehat W \times \widehat V \to \R$ where the first map is $\mathbf{w} \mapsto (\mathbf{w}, \mathbf{p})$ and the second map is $\partial_{m+i}\widetilde{p}$, and so $(\partial_i-)(\mathbf{p}) \circ p$ is smooth, as desired. \\

(ii): Let $\varphi \colon \widehat U \to U$ be a chart, where $U$ contains $p$; say $\varphi(\mathbf{p}) = p$. As $U$ is clearly an embedded D-open subspace, by Proposition \ref{prop:tangent_spaces_are_local}, we have that $\mathrm{T}_pM \cong \mathrm{T}_pU$ via the inclusion $i \colon U \to M$. Moreover, as $\varphi$ is an isomorphism of diffeological spaces, we have that $\mathrm{T}_pU \cong \mathrm{T}_{\mathbf{p}} \widehat U$. Finally, as in (i) above, $\mathrm{T}_{\mathbf{p}}\widehat U \cong \R^n$. \\

In fact, it is easy to see that, for any smooth manifold $M$, the D-topology coincides with the given topology, and moreover that, given any open $U$ in $M$, the functions $U \to \R$ are smooth in the sense of diffeological spaces if and only if they are smooth in the sense of smooth manifolds. It follows that the germs of smooth functions in the diffeological sense coincide with the germs of smooth functions in the manifold sense; it is only the diffeology that is added in the former case. It follows that, if we let $\mathrm{T}_p^{\text{man}}M$, for $p \in M$, denote the tangent space in the sense of smooth manifolds, then $\mathrm{T}_pM$ is a subspace of $\mathrm{T}_p^{\text{man}}M$ (the former contains the representable and smooth derivations, the latter all derivations, both on the same space of germs). Thus, we get the following diagram:
\begin{center}
\begin{tikzpicture}[node distance = 2cm]
\node [] (A) {$\mathrm{T}_{\mathbf{p}}\widehat U$};
\node [below of = A] (B) {$\mathrm{T}^{\text{man}}_{\mathbf{p}}\widehat U$};
\node [right of = A] (C) {$\mathrm{T}_pU$};
\node [below of = C] (D) {$\mathrm{T}^{\text{man}}_pU$};
\node [right of = C] (E) {$\mathrm{T}_pM$};
\node [below of = E] (F) {$\mathrm{T}^{\text{man}}_pM$};

\draw [->] (A) -- (B) node[midway, anchor=east]{$\subseteq$};
\draw [->] (A) -- (C) node[midway, anchor=south]{$\varphi_*$};
\draw [->] (B) -- (D) node[midway, anchor=north]{$\varphi_*$};
\draw [->] (C) -- (D) node[midway, anchor=east]{$\subseteq$};
\draw [->] (C) -- (E) node[midway, anchor=south]{$i_*$};
\draw [->] (E) -- (F) node[midway, anchor=west]{$\subseteq$};
\draw [->] (D) -- (F) node[midway, anchor=north]{$i_*$};
\end{tikzpicture}
\end{center}
All of the horizontal maps are isomorphisms. By (i) above, the leftmost vertical map is also an isomorphism. It follows that the other two vertical maps are also isomorphisms. Thus derivations on smooth manifolds are automatically representable and smooth.
\end{proof}

\subsection{Comparison With Other Approaches to Tangent Spaces}

In the literature, other approaches have been taken to the tangent spaces of diffeological spaces $X$. Here, we briefly mention a few. In \cite{Vincent}, Vincent uses exactly our notion of tangent space, except that: (i) $\R$ is used as the source of paths, as opposed to intervals $(-\varepsilon, \varepsilon)$ (ii) The derivations are taken on only global smooth functions $X \to \R$. Here (i) doesn't actually change anything, as one can always precompose with a smooth function $\R \to (-\varepsilon,\varepsilon)$ which has a unit derivative at the origin. On the other hand (ii) may lead to some differences. \\

In \cite{ChristensenWu}, given a diffeological space $X$ and $x \in X$, Christensen and Wu define the following two notions of tangent spaces:
\begin{itemize}
	\item The \textit{internal tangent space}, which we shall denote by $\mathrm{T}^{\text{int}}_xX$, is the colimit
\[
\underset{\small p \colon \widehat{U} \to X \, \text{centred at} \, x}{\mathrm{colim}} \mathrm{T}_0^{\text{man}}\widehat U
\]
where the colimit is taken over plots which are centred at $x$ (which means that $\widehat U$ is connected, $0 \in \widehat U$ and $p(0) = x$), and maps, represented by the obvious commuting triangles, between such plots. The tangent spaces of $\mathrm{T}_0^{\text{man}}\widehat U$ are the usual ones in the sense of smooth manifolds.
	\item The \textit{external tangent space}, which we shall denote by $\mathrm{T}^{\text{ext}}_xX$, is exactly our tangent space in Definition \ref{def:tangent_space_to_diffeological_space}, but without the representability requirement.
\end{itemize}
The authors also construct a natural map
\[
\mathrm{N} \colon \mathrm{T}^{\text{int}}_xX \to \mathrm{T}^{\text{ext}}_xX
\]
defined as follows: given a plot $p \colon \widehat U \to X$ centred at $x$ and a derivation $v \in \mathrm{T}_0^{\text{man}}\widehat U$, $\mathrm{N}([v])$ is the image of $v$ under $\mathrm{T}_0^{\text{man}}\widehat U \cong \mathrm{T}_0^{\text{ext}}\widehat U \to \mathrm{T}^{\text{ext}}_xX$. They observe that, in general, $\mathrm{N}$ is neither injective nor surjective. In a section on alternative approaches to tangent spaces, they suggest that one could consider the image of $\mathrm{N}$ as an alternate definition. This amounts to the internal tangent vectors, where two such vectors are identified if they yield the same directional derivative operators. The next result shows that our notion of tangent spaces in fact coincides exactly with this image.

\begin{Proposition}
For any diffeological space $X$ and $x \in X$, the tangent space $\mathrm{T}_xX$ is exactly the image $\mathrm{N}(\mathrm{T}^{\emph{int}}_xX) \subseteq \mathrm{T}^{\emph{ext}}_xX$.
\end{Proposition}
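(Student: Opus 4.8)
The plan is to establish the claimed equality by proving the two inclusions $\mathrm{N}(\mathrm{T}^{\text{int}}_xX) \subseteq \mathrm{T}_xX$ and $\mathrm{T}_xX \subseteq \mathrm{N}(\mathrm{T}^{\text{int}}_xX)$ separately, after first recording a concrete description of $\mathrm{N}$. The key preliminary observation is that, for an open $\widehat U \subseteq \R^n$, Proposition \ref{prop:tangent_spaces_for_smooth_manifolds} shows that every manifold derivation at $0$ is automatically smooth and representable, so that $\mathrm{T}_0^{\text{man}}\widehat U = \mathrm{T}_0^{\text{ext}}\widehat U$. Using the functoriality $(q \circ \phi)_* = q_* \circ \phi_*$ of the pushforward (which makes sense on external tangent spaces, since the pushforward of a linear, Leibniz, smooth derivation is again one, by the argument recorded after Definition \ref{def:tangent_space_to_diffeological_space}), the family of linear maps $\mathrm{T}_0^{\text{man}}\widehat U \to \mathrm{T}^{\text{ext}}_xX$ given by $v \mapsto p_*(v)$ is compatible with the colimit diagram, and so induces a well-defined \emph{linear} map $\mathrm{N}$ with $\mathrm{N}([(p,v)]) = p_*(v)$. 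Writing $v \in \mathrm{T}_0^{\text{man}}\widehat U \cong \R^n$ as the vector $\mathbf v$, the derivation $p_*(v)$ sends $[f]$ to $\frac{d}{dh}\big\vert_{h=0} f(p(h\mathbf v))$, and is therefore representable by the single plot $h \mapsto p(h\mathbf v)$ obtained by precomposing $p$ with the smooth curve $h \mapsto h\mathbf v$.

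For the first inclusion, I would use that $\mathrm{T}^{\text{int}}_xX$, being a colimit of vector spaces, has every element expressible as a finite sum $\sum_j [(p_j, v_j)]$. Applying $\mathrm{N}$ and using its linearity gives $\sum_j (p_j)_*(v_j)$, a finite linear combination of derivations each of which is representable by a single plot and is smooth (the $v_j$ are smooth by Proposition \ref{prop:tangent_spaces_for_smooth_manifolds}, and the pushforward preserves smoothness). Since finite linear combinations of smooth derivations are again smooth, such an element is a smooth, representable derivation; that is, it satisfies conditions (i)--(iv) of Definition \ref{def:tangent_space_to_diffeological_space}, and so lies in $\mathrm{T}_xX$.

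For the reverse inclusion, I would start from an arbitrary $v \in \mathrm{T}_xX$ and invoke the representability condition (iv) to write $v = \lambda_1 v_1 + \cdots + \lambda_k v_k$, where each $v_i$ is represented by a plot $p_i \colon (-\varepsilon_i, \varepsilon_i) \to X$ with $p_i(0) = x$ via $v_i([f]) = \frac{d}{dh}\big\vert_{h=0} f(p_i(h))$. Each such $p_i$ is a plot centred at $x$ in the sense required by the colimit, and, letting $\partial_h|_0$ denote the standard generator of $\mathrm{T}_0^{\text{man}}(-\varepsilon_i, \varepsilon_i) \cong \R$, we have $v_i = (p_i)_*(\partial_h|_0) = \mathrm{N}([(p_i, \partial_h|_0)])$. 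Linearity of $\mathrm{N}$ then yields $v = \mathrm{N}\big(\sum_i \lambda_i [(p_i, \partial_h|_0)]\big)$, exhibiting $v$ as an element of $\mathrm{N}(\mathrm{T}^{\text{int}}_xX)$.

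The computations are all routine, and the genuine content of the proposition is simply the recognition that a ``smooth derivation representable by paths'' is exactly a ``finite linear combination of pushforwards of straight-line directional derivatives along plots,'' which is precisely the linear span, hence the image, of the single-plot internal tangent vectors. The one step demanding care is the bookkeeping that matches the representability clause (iv) of Definition \ref{def:tangent_space_to_diffeological_space} against the image of $\mathrm{N}$; in particular, one must verify that $\mathrm{N}$ is genuinely linear (via functoriality of the pushforward) so that linear combinations on the two sides correspond. This is the main obstacle, but it presents no real difficulty once the concrete formula $\mathrm{N}([(p,v)]) = p_*(v)$ is in hand.
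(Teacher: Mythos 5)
Your proof is correct, and its skeleton --- two inclusions, with the reverse inclusion obtained by observing that the interval plots supplied by clause (iv) of Definition \ref{def:tangent_space_to_diffeological_space} are plots centred at $x$, so that $v_i = (p_i)_*(\partial_h|_0) = \mathrm{N}([(p_i,\partial_h|_0)])$ --- matches the paper's. Where you genuinely diverge is in the forward inclusion $\mathrm{N}(\mathrm{T}^{\text{int}}_xX) \subseteq \mathrm{T}_xX$. The paper leans on a cited result from \cite{ChristensenWu}: every internal tangent vector is a linear combination of vectors of the form $p^{\text{int}}_*([\tfrac{d}{dt}])$ with $p \colon \R \to X$ a plot satisfying $p(0)=x$; it then reduces to a single such vector and chases a naturality square relating $p^{\text{int}}_*$ and $p^{\text{ext}}_*$ to conclude representability. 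You avoid that citation altogether: you use only the generic presentation of an element of a colimit of vector spaces as a finite sum $\sum_j [(p_j, v_j)]$, and you make each summand $\mathrm{N}([(p_j,v_j)]) = (p_j)_*(v_j)$ representable by a single one-dimensional plot by restricting the (arbitrary-dimensional) plot $p_j$ along the straight line $h \mapsto h\mathbf{v}_j$. This buys self-containedness: your argument needs only Proposition \ref{prop:tangent_spaces_for_smooth_manifolds} (which gives $\mathrm{T}^{\text{man}}_0\widehat U = \mathrm{T}^{\text{ext}}_0\widehat U$, so that pushforwards of manifold derivations land among smooth derivations) together with functoriality of the pushforward, which --- as you correctly flag --- is also exactly what makes $\mathrm{N}$ well defined and linear, a point the paper inherits silently from the Christensen--Wu construction. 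The paper's version is shorter, but its brevity is purchased by the external reference; yours replaces that reference with an elementary straight-line restriction. The reverse inclusions in the two arguments are essentially identical.
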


\begin{proof}
Let $v$ be a derivation in the image of $\mathrm{N}$, say $v = \mathrm{N}(w)$. As in \cite{ChristensenWu}, $w$ is necessarily a linear combination of vectors of the form $p_*^{\text{int}}([\frac{d}{dt}])$ where $p \colon \R \to X$ is a plot with $p(0) = x$ and $\frac{d}{dt}$ is the standard unit vector in $\mathrm{T}_0^{\text{int}}\R \cong \mathrm{T}_0^{\text{man}}\R$. Without loss of generality, we may assume that $w$ is exactly such a vector $p_*^{\text{int}}([\frac{d}{dt}])$. Consider the following diagram:
\begin{center}
\begin{tikzpicture}[node distance = 2cm]
\node [] (A) {$\R$};
\node [below of = A] (B) {$\R$};
\node [right of = A] (C) {$\mathrm{T}^{\text{int}}_0\R$};
\node [below of = C] (D) {$\mathrm{T}^{\text{ext}}_0\R$};
\node [right of = C] (E) {$\mathrm{T}^{\text{int}}_xX$};
\node [below of = E] (F) {$\mathrm{T}^{\text{ext}}_xX$};

\draw [->] (A) -- (B) node[midway, anchor=east]{$\text{id}$};
\draw [->] (A) -- (C) node[midway, anchor=south]{$1 \mapsto \frac{d}{dt}$};
\draw [->] (B) -- (D) node[midway, anchor=north]{$1 \mapsto \frac{d}{dt}$};
\draw [->] (C) -- (D) node[midway, anchor=east]{$\mathrm{N}$};
\draw [->] (C) -- (E) node[midway, anchor=south]{$p^{\text{int}}_*$};
\draw [->] (E) -- (F) node[midway, anchor=west]{$\mathrm{N}$};
\draw [->] (D) -- (F) node[midway, anchor=north]{$p^{\text{ext}}_*$};
\end{tikzpicture}
\end{center}
By assumption, $1 \in \R$ at the top-left corner, under the top two and rightmost maps, maps to $v$. Thus it also maps to $v$ under the leftmost and bottom two maps. It follows that $v$ is representable, and so in $\mathrm{T}_xX$. Moreover, practically the same argument, with the same diagram, but in reverse and with $\R$ replaced by $(-\varepsilon, \varepsilon)$, shows that if $v \in \text{T}_x^{\text{ext}}X$ is representable, then it is in the image of $\mathrm{N}$, as desired.
\end{proof}

\subsection{Tangent Spaces of Path Spaces}

Now let us consider our particular spaces of interest, namely $\mathscr{P}$ and $\mathscr{P}_{\mathbf{p}, \mathbf{q}}$, as defined in the previous section. Note that $\mathscr{P}$ is itself a real vector space, under pointwise addition and scalar multiplication. The fixed endpoint path space $\mathscr{P}_{\mathbf{p}, \mathbf{q}}$, on the other hand, is not a vector space under these operations; however, the space of paths $\mathscr{P}_{\mathbf{0}, \mathbf{0}}$ where the endpoints are fixed at the origin (that is, the smooth loop space based at the origin) is a vector space under pointwise addition and scalar multiplication. The next result computes the tangent spaces of the path spaces $\mathscr{P}$ and $\mathscr{P}_{\mathbf{p}, \mathbf{q}}$.

\begin{Theorem}
\label{thm:tangent_spaces_to_path_spaces}
For any $\gamma$ in $\mathscr{P}$, we have an isomorphism of vector spaces:
\[
\mathrm{D} \colon \mathscr{P} \overset{\cong}\longrightarrow \mathrm{T}_{\gamma}\mathscr{P}
\]
Similarly, for any $\gamma$ in $\mathscr{P}_{\mathbf{p}, \mathbf{q}}$, we have an isomorphism of vector spaces:
\[
\mathrm{D} \colon \mathscr{P}_{\mathbf{0},\mathbf{0}} \overset{\cong}\longrightarrow \mathrm{T}_{\gamma}\mathscr{P}_{\mathbf{p},\mathbf{q}}
\]
In either case, the isomorphism is defined by the formula $(\mathrm{D}\eta)([f]) = \frac{d}{dh}\Big\vert_{h = 0}f(\gamma + h\eta)$.
\end{Theorem}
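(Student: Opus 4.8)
The plan is to show that $\mathrm{D}$ is a well-defined linear map into the tangent space, then prove injectivity and surjectivity; the substance is in surjectivity. I will treat the free case first and reduce the fixed-endpoint case to it.

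First I would check that $\mathrm{D}\eta$ is a genuine tangent vector at $\gamma$. For $\eta\in\mathscr{P}$ the assignment $(h,t)\mapsto\gamma(t)+h\eta(t)$ is a smooth map $\R\times[a,b]\to\R^N$, so its adjoint $h\mapsto\gamma+h\eta$ is a plot of $\mathscr{P}$ centred at $\gamma$, which immediately exhibits $\mathrm{D}\eta$ as representable in the sense of Definition \ref{def:tangent_space_to_diffeological_space}(iv). Well-definedness on germs is clear since, as in Example \ref{ex:D_topology_of_path_space}, the curve $h\mapsto\gamma+h\eta$ enters any D-open neighbourhood of $\gamma$ for small $h$, and linearity and the Leibniz rule are the usual consequences of differentiating $f$ along this curve. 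The remaining condition, smoothness (iii), I would get for free by showing that \emph{every} representable derivation is automatically smooth, by an argument parallel to Proposition \ref{prop:tangent_spaces_for_smooth_manifolds}(i): a plot of $\mathrm{G}_\gamma\mathscr{P}$ factors locally through some $[U,\R]$, so one passes to its adjoint $\widetilde r\colon\widehat V\times U\to\R$, notes that $(\mathbf w,h)\mapsto\widetilde r(\mathbf w,p(h))$ is smooth, and recognises the composite as the $h$-partial of this smooth function at $h=0$.

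Next I would verify linearity and injectivity of $\mathrm{D}$. Linearity follows from the chain rule applied to the smooth two-variable function $(h_1,h_2)\mapsto f(\gamma+h_1\eta_1+h_2\eta_2)$, whose existence uses that the corresponding adjoint is smooth. For injectivity I would use the integral functionals $f_\alpha(\beta)=\int_a^b\alpha(t)\cdot\beta(t)\,dt$ of Example \ref{ex:smooth_function_on_path_space}, which are globally smooth on $\mathscr{P}$: one computes $\mathrm{D}\eta([f_\alpha])=\int_a^b\alpha(t)\cdot\eta(t)\,dt$, and taking $\alpha=\eta$ forces $\int_a^b|\eta(t)|^2\,dt=0$, hence $\eta\equiv0$.

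The hard part will be surjectivity. By representability it suffices to show that each derivation $v_p([f])=\frac{d}{dh}\big\vert_0 f(p(h))$ coming from a plot $p\colon(-\varepsilon,\varepsilon)\to\mathscr{P}$ with $p(0)=\gamma$ equals $\mathrm{D}\eta$ for $\eta(t):=\partial_h\widetilde p(0,t)$, which lies in $\mathscr{P}$ because $\widetilde p$ is smooth; then a representable $v=\sum_i\lambda_i v_{p_i}$ equals $\mathrm{D}\big(\sum_i\lambda_i\eta_i\big)$. To identify $v_p$ with $\mathrm{D}\eta$ I would Taylor-expand $\widetilde p(h,t)=\gamma(t)+h\eta(t)+h^2R(h,t)$ with $R(h,t)=\int_0^1(1-s)\partial_h^2\widetilde p(sh,t)\,ds$ smooth, form the plot with adjoint $(h,r,t)\mapsto\gamma(t)+h\eta(t)+rR(h,t)$, and set $G(h,r)=f\big(\gamma+h\eta+rR(h,\cdot)\big)$, smooth near $(0,0)$. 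Then $f(p(h))=G(h,h^2)$ while $f(\gamma+h\eta)=G(h,0)$, and the chain rule gives $\frac{d}{dh}\big\vert_0 G(h,h^2)=\partial_hG(0,0)=\frac{d}{dh}\big\vert_0 G(h,0)$, since the $2h\,\partial_rG$ term vanishes at $h=0$; hence $v_p([f])=\mathrm{D}\eta([f])$ for every germ. I expect this Taylor-and-chain-rule step to be the main obstacle, as it is where the infinite-dimensional setting is tamed by reducing everything to the finite-dimensional smooth function $G$.

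Finally, the fixed-endpoint statement follows by running the same scheme inside the subspace $\mathscr{P}_{\mathbf p,\mathbf q}$, with two compatibility checks. First, if $\eta\in\mathscr{P}_{\mathbf 0,\mathbf 0}$ then $\gamma+h\eta$ retains the endpoints $\mathbf p,\mathbf q$, so $\mathrm{D}\eta$ lands in $\mathrm{T}_\gamma\mathscr{P}_{\mathbf p,\mathbf q}$. Conversely, for a plot $p$ valued in $\mathscr{P}_{\mathbf p,\mathbf q}$ one has $\widetilde p(h,a)\equiv\mathbf p$ and $\widetilde p(h,b)\equiv\mathbf q$, whence $\eta(a)=\eta(b)=0$ and moreover $R(h,a)=R(h,b)=0$; thus $\eta\in\mathscr{P}_{\mathbf 0,\mathbf 0}$ and the auxiliary plot $(h,r,t)\mapsto\gamma(t)+h\eta(t)+rR(h,t)$ still takes values in $\mathscr{P}_{\mathbf p,\mathbf q}$, so the entire surjectivity argument remains valid within the subspace. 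Injectivity transfers verbatim using the restrictions of the functionals $f_\alpha$ to $\mathscr{P}_{\mathbf p,\mathbf q}$.
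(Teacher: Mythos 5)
Your proposal is correct and follows essentially the same route as the paper's proof: the same well-definedness checks via Example \ref{ex:D_topology_of_path_space}, the same smoothness argument obtained by locally factoring a plot of $\mathrm{G}_\gamma\mathscr{P}$ through some $[U,\R]$ and passing to adjoints, the same two-parameter directional-derivative argument for additivity of $\mathrm{D}$, the same functional $\alpha\mapsto\int_a^b\eta(t)\cdot\alpha(t)\,dt$ for injectivity, and the same reduction of surjectivity to a single representing plot $p$ with $\eta=\partial_1\widetilde p(0,\cdot)$. The only difference is in the implementation of that last step: you compare $v_p$ with $\mathrm{D}\eta$ via the second-order Taylor expansion $\widetilde p(h,t)=\gamma(t)+h\eta(t)+h^2R(h,t)$ and a chain-rule evaluation along the parabola $(h,h^2)$, whereas the paper uses the first-order integral form $G'(h,k,t)=\gamma(t)+h\int_0^1\partial_1\widetilde p(ks,t)\,ds$ and additivity of directional derivatives along the diagonal $(h,h)$ --- the same polarization trick, one order lower.
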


Here, we are using the same notation $\mathrm{D}$ for both isomorphisms, and the same as we used for the isomorphism in Proposition \ref{prop:tangent_spaces_for_smooth_manifolds}(i). The reason is of course to emphasize that they all play the same role in their individual contexts; the given context will always make it clear which exact map we intend.

\begin{proof}
We shall prove the case of $\mathscr{P}$; the proof for $\mathscr{P}_{\mathbf{p}, \mathbf{q}}$ is entirely analogous. First, we need to check that $\mathrm{D}\eta$ is well-defined:
\begin{itemize}
	\item First, it follows by Example \ref{ex:D_topology_of_path_space} that, given a D-open $U$ containing $\gamma$, for sufficiently small $h$, $\gamma + h\eta$ is also in $U$.
	\item Next, we need to check that the function $h \mapsto f(\gamma + h\eta)$ is smooth. This is exactly the composite of the map $(-\varepsilon, \varepsilon) \to U : h \mapsto \gamma + h\eta$ (for an appropriate $\varepsilon > 0$), which is smooth as the adjoint $(-\varepsilon, \varepsilon) \times [a,b] \to \R^N$ is clearly smooth, and $f \colon U \to \R$, which is smooth by assumption, and so the function is itself smooth, as desired.
	\item Finally, we need to check that the derivative is independent of the choice of $f$. Suppose that $(U, f) \sim (V, g)$. Then $f = g$ on some D-open $W$ inside $U \cap V$. As per the first point above, for sufficiently small $h$, $\gamma + h\eta \in W$, and so $f(\gamma + h\eta) = g(\gamma + h\eta)$, which gives us the desired result.
\end{itemize}
Linearity and the Leibniz rule are clear. So is representability, via the path $h \mapsto \gamma + h\eta$ (which we showed just above to be smooth). We also need to check that $\mathrm{D}\eta$ is smooth. Consider a plot $\widehat U \to \mathrm{G}_\gamma\mathscr{P}$. We need to show that the composite $\widehat U \to G_\gamma\mathscr{P} \to \R$ is smooth. By the sheaf condition, it suffices to demonstrate this locally. By construction of the diffeology on $\mathrm{G}_\gamma\mathscr{P}$, for each $\mathbf{u} \in \widehat U$, there is some $\widehat V \subseteq \widehat U$ containing $\mathbf{u}$, a D-open $U$ containing $\gamma$ and a function $\widehat V \to [U, \R]$, a plot of $[U, \R]$, such that the adjoint $\widehat V \times U \to \R$ is smooth and the composite $\widehat V \to \widehat U \to \mathrm{G}_\gamma\mathscr{P}$ factors as $\widehat V \to [U, \R] \to G_\gamma\mathscr{P}$. Now, denoting the map $\widehat V \to [U,\R]$ by $\mathbf{v} \mapsto f_{\mathbf{v}}$, we need to show that the map
\[
F \colon \widehat V \to \R : \mathbf{v} \mapsto \frac{d}{dh}\Big\vert_{h=0}f_{\mathbf{v}}(\gamma + h\eta)
\] 
is smooth. We know that the map $\widehat V \times U \to \R : (\mathbf{v}, \alpha) \mapsto f_{\mathbf{v}}(\alpha)$ is smooth. As above, we may choose some $\varepsilon > 0$ such that $\gamma + h\eta \in U$ for each $h \in (-\varepsilon, \varepsilon)$; moreover, the map $(-\varepsilon, \varepsilon) \to \mathscr{P} : h \mapsto \gamma + h\eta$ is a plot of $\mathscr{P}$. By definition of the Euclidean and product diffeologies, we have that the map
\[
G \colon \widehat V \times (-\varepsilon, \varepsilon) \to \R : (\mathbf{v}, h) \mapsto f_{\mathbf{v}}(\gamma + h\eta)
\]
is smooth. In particular, then, the map
\[
\frac{\partial G}{\partial h} \colon \widehat V \times (-\varepsilon, \varepsilon) \to \R : (\mathbf{v}, h) \mapsto \frac{\partial}{\partial h}f_{\mathbf{v}}(\gamma + h\eta)
\]
is smooth. Finally, we see that $F(\mathbf{v})$ is exactly $\frac{\partial G}{\partial h}(\mathbf{v}, 0)$, and so $F$ is smooth, as desired. \\

We have now verified that, for each $\eta$, $\mathrm{D}\eta$ is a representable smooth derivation. It remains to check that the assignment $\eta \mapsto \mathrm{D}\eta$ is linear in $\eta$, and is bijective. It's easy to see, directly from the limit definition of derivatives, that $\mathrm{D}(\lambda \eta) = \lambda (\mathrm{D}\eta)$ for $\lambda \in \R$ and $\eta \in \mathscr{P}$. Additivity is less obvious. Let $\eta$ and $\eta'$ be paths in $\mathscr{P}$ and, for a smooth map $f \colon U \to \R$ on some D-open $U$, consider the function
\[
F' \colon (h,k) \mapsto f(\gamma + h\eta + k\eta')
\]
defined on $(-\varepsilon, \varepsilon) \times (-\varepsilon, \varepsilon)$ for a sufficiently small $\varepsilon > 0$ (there exists such an $\varepsilon$ because $U$ is D-open). This function $F'$ is smooth because it is the composite of $(h,k) \mapsto \gamma + h\eta + k\eta'$ with $f$, and the former is smooth because its adjoint $(-\varepsilon, \varepsilon) \times (-\varepsilon, \varepsilon) \times [a,b] \to \R^N : (h, k, t) \mapsto \gamma(t) + h\eta(t) + k\eta'(t)$ is smooth. Now, let us compute the directional derivatives $\nabla_{(1,0)}$, $\nabla_{(0,1)}$ and $\nabla_{(1,1)}$ of $F'$ at $(0,0)$. An easy check shows that:
\[
\nabla_{(1,0)} F' = \mathrm{D}(\eta)([f])
\hspace{1cm}
\text{and}
\hspace{1cm}
\nabla_{(0,1)} F' = \mathrm{D}(\eta')([f])
\]
Moreover, we can similarly see that the directional derivative along $(1,1)$ is
\[
\nabla_{(1,1)}F' = \mathrm{D}(\eta + \eta')([f])
\]
and so, using $\nabla_{(1,1)} = \nabla_{(1,0)} + \nabla_{(0,1)}$, we have
\[
\mathrm{D}(\eta + \eta')([f]) = \mathrm{D}(\eta)([f]) + \mathrm{D}(\eta')([f])
\]
as desired. \\

We have now demonstrated linearity of $\mathrm{D}$. It remains to show that $\mathrm{D}$ is bijective. First, to see that $\mathrm{D}$ is injective, suppose that $\mathrm{D}\eta = 0$. Consider the function
\[
f \colon \mathscr{P} \to \R : \alpha \mapsto \int_a^b \eta(t) \cdot \alpha(t)\,dt
\]
This is smooth as per Example \ref{ex:smooth_function_on_path_space}. Noting that
\begin{align*}
(\mathrm{D}\eta)([f]) &= \frac{d}{dh}\Big\vert_{h=0}\int_a^b \eta(t) \cdot (\gamma(t) + h\eta(t))\,dt \\
&= \int_a^b |\eta(t)|^2\,dt
\end{align*}
which, by assumption, is zero, $\eta$, being smooth, must itself be zero, as desired. \\

Finally, to see that $\mathrm{D}$ is surjective, let $v$ be a derivation. By representability, without loss of generality, we may suppose that there exists a plot $p \colon (-\varepsilon, \varepsilon) \to \mathscr{P}$ such that $v([f]) = \frac{d}{dh}\Big\vert_{h=0}f(p(h))$. Let $\widetilde{p}$ denote the smooth adjoint $\widetilde p \colon (-\varepsilon, \varepsilon) \times [a,b] \to \R^N : (h,t) \mapsto [p(h)](t)$. Fix some smooth $f \colon U \to \R$ defined on some $D$-open neighbourhood of  $\gamma$, and consider the following map:
\[
G' \colon (-\varepsilon, \varepsilon) \times (-\varepsilon, \varepsilon) \times [a,b] \to \R^N : (h,k,t) \mapsto \gamma(t) + h\int_0^1\partial_1 \widetilde{p}\left(ks,t\right)\,ds
\]
(The integral is computed component-wise.) As this is a smooth map, we have an associated adjoint plot $\widetilde{G'} \colon (-\varepsilon, \varepsilon) \times (-\varepsilon, \varepsilon) \to \mathscr{P}$. Now consider the directional derivatives $\nabla_{(1,0)}, \nabla_{(0,1)}$ and $\nabla_{(1,1)}$ of $f \circ \widetilde{G'}$ at $(0,0)$ (in defining the composite $f \circ \widetilde{G'}$, the domain of $\widetilde{G'}$ may need to be appropriately shrunk to a smaller open square about the origin; this is always possible because $U$ is D-open). Note that $\widetilde{G'}(0,0) = \gamma$. As $G'(0,k,t) = \gamma(t)$, we have that $\widetilde{G'}(0,k)$ is constant at $\gamma$ and so:
\[
\nabla_{(0,1)}(f \circ \widetilde{G'}) = 0
\]
Next, noting that
\begin{align*}
G'(h,0,t) &= \gamma(t) + h\int_0^1\partial_1 \widetilde{p}\left(0,t\right)\,ds \\
&= \gamma(t) + h\partial_1 \widetilde{p}\left(0,t\right)
\end{align*}
if we let $\eta$ denote the path $\partial_1 \widetilde{p}(0,-)$, we see that
\[
\nabla_{(1,0)}(f \circ \widetilde{G'}) = \lim_{h \to 0}\frac{f(\gamma + h\eta)-f(\gamma)}{h} = (\mathrm{D}\eta)([f])
\]
Moreover, noting that, for $h \neq 0$
\begin{align*}
G'(h,h,t) &= \gamma(t) + h\int_0^1\partial_1 \widetilde{p}\left(hs,t\right)\,ds \\
&= \gamma(t) + h\int_0^h\frac{1}{h}\partial_1 \widetilde{p}\left(u,t\right)\,du \\
&= \gamma(t) + \widetilde{p}(h,t) - \underbrace{\widetilde{p}(0,t)}_{= \gamma(t)} \\
&= \widetilde{p}(h,t)
\end{align*}
we have that $\widetilde{G'}(h,h) = p(h)$ and so:
\[
\nabla_{(1,1)}(f \circ \widetilde{G'}) = \lim_{h \to 0}\frac{f(p(h))-f(\gamma)}{h} = v([f])
\]
Finally, as $\nabla_{(1,1)} = \nabla_{(1,0)} + \nabla_{(0,1)}$, we have that $v = \mathrm{D}\eta$, and so $v$ lies in the image of $\mathrm{D}$, as desired.
\end{proof}

\begin{Remark}
In the calculus of variations, given a function $f \colon \mathscr{P}_{\mathbf{p}, \mathbf{q}} \to \R$, we consider perturbations, or variations, of an input path $\gamma$, by adding $h\eta$ for sufficiently small $h$ and a path $\eta$ which is zero at the endpoints, and then we take the derivative at $h=0$. We intuitively think of this as an infinitesimal perturbation in the direction of $\eta$. With the isomorphism $\mathrm{D}$ constructed above in Theorem \ref{thm:tangent_spaces_to_path_spaces}, we can now identify this infinitesimal perturbation as literally a tangent to the path space.
\end{Remark}

\section{The Euler-Lagrange Equations as the Gradient of the Action Functional}

We now consider gradients of smooth maps $X \to \R$ on diffeological spaces $X$.

\begin{Definition}[\textbf{Cotangent Spaces}]
Let $X$ be a diffeological space and let $x \in X$. The \emph{cotangent space to $X$ at $x$} is the vector space dual $(\mathrm{T}_xX)^*$.
\end{Definition}

\begin{Definition}[\textbf{Gradients}]
Let $X$ be a diffeological space, $f \colon X \to \R$ a smooth map and $x \in X$. We have an induced map $\mathrm{T}_xX \to \mathrm{T}_{f(x)}\R \cong \R$. This map, a cotangent at $x$, is the \textit{gradient of $f$ at $x$}, denoted $(\nabla f)(x)$.
\end{Definition}

In the case of $\R^n$, as a vector space, we have the musical isomorphism
\[
\sharp \colon \R^n \overset{\cong}\longrightarrow (\R^n)^*
\]
where $\mathbf{v} \in \R^n$ is sent to $\langle \mathbf{v}, -\rangle = \mathbf{v} \cdot (-)$. (More generally, we have such isomorphisms for the tangent spaces of Riemannian manifolds.) In the case of $\mathscr{P}$ and $\mathscr{P}_{\mathbf{0}, \mathbf{0}}$, we also have musical maps
\[
\sharp \colon \mathscr{P} \longrightarrow \mathscr{P}^*
\hspace{1cm}
\sharp \colon \mathscr{P}_{\mathbf{0}, \mathbf{0}} \longrightarrow \mathscr{P}_{\mathbf{0}, \mathbf{0}}^*
\]
where, in either case, a path $\alpha$ is sent to $\langle \alpha, - \rangle = \int \alpha \cdot (-)$. In these cases, $\sharp$ is no longer an isomorphism: it cannot be surjective as the dual space of an infinite dimensional vector space is always strictly larger than the original space. However, it is clearly still injective. Here, we are using the same notation $\sharp$ for all three maps, to emphasize the analogy that exists between them; the given context will always make it clear which exact map we intend. \\

In the case of $\R^n$, as a smooth manifold, at any given point $\mathbf{p}$, we have the isomorphism $\mathrm{D} \colon \R^n \to \mathrm{T}_{\mathbf{p}}\R^n$ as in Proposition \ref{prop:tangent_spaces_for_smooth_manifolds}, and the musical isomorphism is compatible with this map in the sense that we have the following commutative square:
\begin{center}
\begin{tikzpicture}[node distance = 2cm]
\node [] (A) {$\R^n$};
\node [right of = A] (B) {$(\R^n)^*$};
\node [below of = A] (C) {$\mathrm{T}_p\R^n$};
\node [below of = B] (D) {$(\mathrm{T}_p\R^n)^*$};

\draw [->] (A) -- (B) node[midway, anchor=south]{$\sharp$};
\draw [->] (A) -- (C) node[midway, anchor=east]{$\mathrm{D}$};
\draw [->] (D) -- (B) node[midway, anchor=west]{$\mathrm{D}^*$};
\draw [->] (C) -- (D) node[midway, anchor=south]{$\sharp$};
\end{tikzpicture}
\end{center}
The analogous pictures that we have for $\mathscr{P}$ and $\mathscr{P}_{\mathbf{p}, \mathbf{q}}$ are as follows:
\begin{center}
\begin{tikzpicture}[node distance = 2cm]
\node [] (A) {$\mathscr{P}$};
\node [right of = A] (B) {$(\mathscr{P})^*$};
\node [below of = A] (C) {$\mathrm{T}_\gamma\mathscr{P}$};
\node [below of = B] (D) {$(\mathrm{T}_\gamma\mathscr{P})^*$};

\draw [->] (A) -- (B) node[midway, anchor=south]{$\sharp$};
\draw [->] (A) -- (C) node[midway, anchor=east]{$\mathrm{D}$};
\draw [->] (D) -- (B) node[midway, anchor=west]{$\mathrm{D}^*$};
\draw [-, dashed] (C) -- (D);

\node [right of = A, xshift = 3cm] (AA) {$\mathscr{P}_{\mathbf{0}, \mathbf{0}}$};
\node [right of = AA] (BB) {$(\mathscr{P}_{\mathbf{0}, \mathbf{0}})^*$};
\node [below of = AA] (CC) {$\mathrm{T}_\gamma\mathscr{P}_{\mathbf{p}, \mathbf{q}}$};
\node [below of = BB] (DD) {$(\mathrm{T}_\gamma\mathscr{P}_{\mathbf{p}, \mathbf{q}})^*$};

\draw [->] (AA) -- (BB) node[midway, anchor=south]{$\sharp$};
\draw [->] (AA) -- (CC) node[midway, anchor=east]{$\mathrm{D}$};
\draw [->] (DD) -- (BB) node[midway, anchor=west]{$\mathrm{D}^*$};
\draw [-, dashed] (CC) -- (DD);
\end{tikzpicture}
\end{center}
Here, the maps $\mathrm{D}$ are those in Theorem \ref{thm:tangent_spaces_to_path_spaces}, and the bottom arrows are dashed to indicate that these are missing in the case of the path spaces (we will find that, for our purposes, these maps are not necessary). \\

Now, recall from earlier that we can think of the inclusion
\[
\mathscr{P}_{\mathbf{p}, \mathbf{q}} \hookrightarrow \mathscr{P}
\]
as an infinte dimensional variant of the inclusion
\[
i \colon H \hookrightarrow \R^{n+k}
\]
where $n, k > 0$, $H$ is the affine subspace $\{(x_1,\dots,x_{n+k}) \mid x_{n+1} = c_{n+1}, \dots, x_{n+k} = c_{n+k}\}$ for some constants $c_{n+1}, \dots, c_{n+k}$, and the inclusion is the identity. The tangent space to $H$, at any point, is $H_0 := \{(x_1,\dots,x_{n+k}) \mid x_{n+1} = 0, \dots, x_{n+k} = 0\}$, and, for any point $\mathbf{p}$, we have a map
\[
\mathrm{D} \colon H_0 \to \mathrm{T}_{\mathbf{p}}H
\]
defined by the formula which we have used for the other analogous maps, $\mathrm{D}(\mathbf{v})([f]) = \frac{d}{dh}\Big\vert_{h=0}f(\mathbf{p} + h\mathbf{v})$. Letting $i_0$ denote the inclusion $H_0 \to \R^{n+k}$, combining the $\mathrm{D}$ and $\sharp$ maps, together with dualization, we have the following commutative diagram:
\begin{center}
\begin{tikzpicture}[node distance = 2cm]
\node at (0,0) (A) {$H_0$};
\node at (0,-3) (B) {$\R^{n+k}$};
\node at (2,2) (C) {$(H_0)^*$};
\node at (2,-1) (D) {$(\R^{n+k})^*$};

\node at (4,0) (AA) {$\mathrm{T}_{\mathbf{p}}H$};
\node at (4,-3) (BB) {$\mathrm{T}_{\mathbf{p}}\R^{n+k}$};
\node at (6,2) (CC) {$(\mathrm{T}_{\mathbf{p}}H)^*$};
\node at (6,-1) (DD) {$(\mathrm{T}_{\mathbf{p}}\R^{n+k})^*$};

\draw [right hook->] (A) -- (B) node[midway, anchor=east]{$i_0$};
\draw [-] (2,-0.7) -- (2,-0.1);
\draw [->] (2,0.1) -- (2,1.6) node[midway, anchor=east, yshift=-2mm]{$i_0^*$};
\draw [->] (A) -- (C) node[midway, anchor=south]{$\sharp$};
\draw [->] (B) -- (D) node[midway, anchor=south]{$\sharp$};

\draw [->] (AA) -- (BB) node[midway, anchor=east, yshift=-3mm]{$\mathrm{T}_{\mathbf{p}}i$};
\draw [->] (DD) -- (CC) node[midway, anchor=east, yshift=-6mm]{$(\mathrm{T}_{\mathbf{p}}i)^*$};
\draw [->] (AA) -- (CC) node[midway, anchor=south]{$\sharp$};
\draw [->] (BB) -- (DD) node[midway, anchor=south]{$\sharp$};

\draw [->] (A) -- (AA) node[midway, anchor=south, xshift=7mm]{$\mathrm{D}$};
\draw [->] (B) -- (BB) node[midway, anchor=south]{$\mathrm{D}$};
\draw [->] (CC) -- (C) node[midway, anchor=south]{$\mathrm{D}^*$};
\draw [-] (5,-1) -- (4.1,-1);
\draw [->] (3.9,-1) -- (2.8,-1) node[midway,anchor=south, xshift=1mm]{$\mathrm{D}^*$};
\end{tikzpicture}
\end{center}
Let
\[
f \colon \R^{n+k} \to \R : (x_1, \dots, x_{n+k}) \mapsto f(x_1,\dots,x_{n+k})
\]
be a smooth function, and let
\[
fi \colon H \to \R : (x_1,\dots,x_n,c_{n+1},\dots,c_{n+k}) \mapsto f(x_1,\dots,x_n,c_{n+1},\dots,c_{n+k})
\]
be the restriction of $f$ to $H$. For a given $\mathbf{p} \in H$, the definition of $(\nabla fi)(\mathbf{p})$ in the sense of diffeological spaces (or smooth manifolds) and the vector
\[
\left(\frac{\partial f}{\partial x_1}(i(\mathbf{p})), \dots, \frac{\partial f}{\partial x_n}(i(\mathbf{p}))\right)
\]
that one typically uses in multivariable calculus, correspond in the following sense:
\begin{center}
\begin{tikzpicture}[node distance = 2cm]
\node at (0,0) (A) {$\color{gray} H_0$};
\node at (0,-3) (B) {$\color{gray} \R^{n+k}$};
\node at (2,2) (C) {$\color{gray} (H_0)^*$};
\node at (2,-1) (D) {$\color{gray} (\R^{n+k})^*$};

\node at (4,0) (AA) {$\color{gray} \mathrm{T}_{\mathbf{p}}H$};
\node at (4,-3) (BB) {$\color{gray} \mathrm{T}_{\mathbf{p}}\R^{n+k}$};
\node at (6,2) (CC) {$\color{gray} (\mathrm{T}_{\mathbf{p}}H)^*$};
\node at (6,-1) (DD) {$\color{gray} (\mathrm{T}_{\mathbf{p}}\R^{n+k})^*$};

\draw [right hook->] (A) -- (B) node[midway, anchor=east]{$\color{gray} i_0$};
\draw [-] (2,-0.7) -- (2,-0.1);
\draw [->] (2,0.1) -- (2,1.6) node[midway, anchor=east, yshift=-2mm]{$\color{gray} i_0^*$};
\draw [->] (A) -- (C) node[midway, anchor=south]{$\color{gray} \sharp$};
\draw [->] (B) -- (D) node[midway, anchor=south]{$\color{gray} \sharp$};

\draw [->] (AA) -- (BB) node[midway, anchor=east, yshift=-3mm]{$\color{gray} \mathrm{T}_{\mathbf{p}}i$};
\draw [->] (DD) -- (CC) node[midway, anchor=east, yshift=-6mm]{$\color{gray} (\mathrm{T}_{\mathbf{p}}i)^*$};
\draw [->] (AA) -- (CC) node[midway, anchor=south]{$\color{gray} \sharp$};
\draw [->] (BB) -- (DD) node[midway, anchor=south]{$\color{gray} \sharp$};

\draw [->] (A) -- (AA) node[midway, anchor=south, xshift=7mm]{$\color{gray} \mathrm{D}$};
\draw [->] (B) -- (BB) node[midway, anchor=south]{$\color{gray} \mathrm{D}$};
\draw [->] (CC) -- (C) node[midway, anchor=south]{$\color{gray} \mathrm{D}^*$};
\draw [-] (5,-1) -- (4.1,-1);
\draw [->] (3.9,-1) -- (2.8,-1) node[midway,anchor=south, xshift=1mm]{$\color{gray} \mathrm{D}^*$};

\node at (7.75,-1) (E) {$(\nabla f)(\mathbf{p})$};
\node at (7.75,2.5) (F) {$(\nabla fi)(\mathbf{p})$};
\node at (-2.25,2.5) (G) {$[\frac{\partial f}{\partial x_1}(i(\mathbf{p})) \,\, \cdots \,\, \frac{\partial f}{\partial x_{n+k}}(i(\mathbf{p}))] = \mathrm{D}^*[(\nabla fi)(\mathbf{p})]$};

\node at (-4.25,-3) (H) {$\left[\begin{array}{c} \frac{\partial f}{\partial x_1}(i(\mathbf{p})) \\ \vdots \\ \frac{\partial f}{\partial x_{n+k}}(i(\mathbf{p})) \end{array}\right]$};
\node at (-2.25,-1) (I) {$[\frac{\partial f}{\partial x_1}(i(\mathbf{p})) \,\, \cdots \,\, \frac{\partial f}{\partial x_{n+k}}(i(\mathbf{p}))]$};

\draw [|->] (E) -- (F);
\draw [|->] (F) -- (G);
\draw [|->] (H) -- (I);
\draw [|->] (I) -- (G);
\end{tikzpicture}
\end{center}
(At the top-left corner, on $H_0$, the transformation $[\partial_1f(i(\mathbf{p})) \,\, \cdots \,\, \partial_{n+k}f(i(\mathbf{p}))]$ is of course the same as the transformation $[\partial_1f(i(\mathbf{p})) \,\, \cdots \,\, \partial_{n}f(i(\mathbf{p})) \, 0 \cdots 0]$, as the final $k$ components of vectors in $H_0$ are always zero.) \\

Now let us consider the case of our path spaces. In this case, we let $i$ denote the inclusion $\mathscr{P}_{\mathbf{p}, \mathbf{q}} \to \mathscr{P}$ and $i_0$ the inclusion $\mathscr{P}_{\mathbf{0}, \mathbf{0}} \to \mathscr{P}$. We then get the following analogous diagram:
\begin{center}
\begin{tikzpicture}[node distance = 2cm]
\node at (0,0) (A) {$\mathscr{P}_{\mathbf{0},\mathbf{0}}$};
\node at (0,-3) (B) {$\mathscr{P}$};
\node at (2,2) (C) {$(\mathscr{P}_{\mathbf{0},\mathbf{0}})^*$};
\node at (2,-1) (D) {$(\mathscr{P})^*$};

\node at (4,0) (AA) {$\mathrm{T}_{\gamma}\mathscr{P}_{\mathbf{p}, \mathbf{q}}$};
\node at (4,-3) (BB) {$\mathrm{T}_{\gamma}\mathscr{P}$};
\node at (6,2) (CC) {$(\mathrm{T}_{\gamma}\mathscr{P}_{\mathbf{p}, \mathbf{q}})^*$};
\node at (6,-1) (DD) {$(\mathrm{T}_{\gamma}\mathscr{P})^*$};

\draw [right hook->] (A) -- (B) node[midway, anchor=east]{$i_0$};
\draw [-] (2,-0.7) -- (2,-0.1);
\draw [->] (2,0.1) -- (2,1.6) node[midway, anchor=east, yshift=-2mm]{$i_0^*$};
\draw [->] (A) -- (C) node[midway, anchor=south]{$\sharp$};
\draw [->] (B) -- (D) node[midway, anchor=south]{$\sharp$};

\draw [->] (AA) -- (BB) node[midway, anchor=east, yshift=-3mm]{$\mathrm{T}_{\mathbf{p}}i$};
\draw [->] (DD) -- (CC) node[midway, anchor=east, yshift=-6mm, xshift=0.5mm]{$(\mathrm{T}_{\mathbf{p}}i)^*$};
\draw [-, dashed] (AA) -- (CC) node[midway, anchor=south]{};
\draw [-, dashed] (BB) -- (DD) node[midway, anchor=south]{};

\draw [->] (A) -- (AA) node[midway, anchor=south, xshift=7mm]{$\mathrm{D}$};
\draw [->] (B) -- (BB) node[midway, anchor=south]{$\mathrm{D}$};
\draw [->] (CC) -- (C) node[midway, anchor=south]{$\mathrm{D}^*$};
\draw [-] (5,-1) -- (4.1,-1);
\draw [->] (3.9,-1) -- (2.8,-1) node[midway,anchor=south, xshift=1mm]{$\mathrm{D}^*$};
\end{tikzpicture}
\end{center}

\begin{Remark}
\label{rmk:i0star_after_sharp_injective}
One difference between this picture and the analogous one for $H$ and $\R^{n+k}$ above, is that $i_0^* \circ \sharp$, as a map $\mathscr{P} \to (\mathscr{P}_{\mathbf{0}, \mathbf{0}})^*$, is injective. This is because, given $\gamma \in \mathscr{P}$, if $\int \gamma \cdot \eta$ is zero for all $\eta$ which are zero at the endpoints, then, by a simple continuity argument, $\gamma$ itself must be zero. 
\end{Remark}

Consider now a smooth function
\[
f \colon \mathscr{P} \to \R
\]
and the restriction $fi \colon \mathscr{P}_{\mathbf{p}, \mathbf{q}} \to \R$. In analogy with the case of $H \to \R^{n+k}$ above, we wish to compute $\mathrm{D}^*[(\nabla fi)(\gamma)]$ in terms of a tangent vector in $\mathscr{P}$. For a general $f$, there is no guarantee of such a computation. However, we have the following for a class of functions $f$, namely those which come from a Lagrangian.

\begin{Theorem}
\label{thm:euler_lagrange_equations_as_gradients}
Let $L(x_1,\dots,x_n,\dot{x}_1,\dots,\dot{x}_n,t)$ be a smooth function $\R^{2N+1} \to \R$ and consider the following smooth function on paths
\[
S \colon \mathscr{P} \to \R : \gamma \mapsto \int_a^b L(\gamma(t), \gamma'(t), t)\,dt
\]
as well as the restriction $Si$ to $\mathscr{P}_{\mathbf{p}, \mathbf{q}}$. For any $\gamma \in \mathscr{P}_{\mathbf{p}, \mathbf{q}}$, let $\emph{EL}_\gamma \in \mathscr{P}$, which we shall refer to as the Euler-Lagrange path, be the following path:
\[
\mathrm{EL}_\gamma(t) = \left(\frac{\partial L}{\partial x_1} - \frac{d}{dt}\frac{\partial L}{\partial \dot{x}_1}, \cdots, \frac{\partial L}{\partial x_N} - \frac{d}{dt}\frac{\partial L}{\partial \dot{x}_N}\right)
\]
Then $\mathrm{EL}_\gamma$ and $(\nabla Si)(\gamma)$ are related as follows:
\begin{center}
\begin{tikzpicture}[node distance = 2cm]
\node at (0,0) (A) {$\color{gray} \mathscr{P}_{\mathbf{0},\mathbf{0}}$};
\node at (0,-3) (B) {$\color{gray} \mathscr{P}$};
\node at (2,2) (C) {$\color{gray} (\mathscr{P}_{\mathbf{0},\mathbf{0}})^*$};
\node at (2,-1) (D) {$\color{gray} (\mathscr{P})^*$};

\node at (4,0) (AA) {$\color{gray} \mathrm{T}_{\gamma}\mathscr{P}_{\mathbf{p}, \mathbf{q}}$};
\node at (4,-3) (BB) {$\color{gray} \mathrm{T}_{\gamma}\mathscr{P}$};
\node at (6,2) (CC) {$\color{gray} (\mathrm{T}_{\gamma}\mathscr{P}_{\mathbf{p}, \mathbf{q}})^*$};
\node at (6,-1) (DD) {$\color{gray} (\mathrm{T}_{\gamma}\mathscr{P})^*$};

\draw [right hook->, gray] (A) -- (B) node[midway, anchor=east]{$\color{gray} i_0$};
\draw [-, gray] (2,-0.7) -- (2,-0.1);
\draw [->, gray] (2,0.1) -- (2,1.6) node[midway, anchor=east, yshift=-2mm]{$\color{gray} i_0^*$};
\draw [->, gray] (A) -- (C) node[midway, anchor=south]{$\color{gray} \sharp$};
\draw [->, gray] (B) -- (D) node[midway, anchor=south]{$\color{gray} \sharp$};

\draw [->, gray] (AA) -- (BB) node[midway, anchor=east, yshift=-3mm]{$\color{gray} \mathrm{T}_{\gamma}i$};
\draw [->, gray] (DD) -- (CC) node[midway, anchor=east, yshift=-6mm, xshift=0.5mm]{$\color{gray} (\mathrm{T}_{\gamma}i)^*$};
\draw [-, dashed, gray] (AA) -- (CC) node[midway, anchor=south]{};
\draw [-, dashed, gray] (BB) -- (DD) node[midway, anchor=south]{};

\draw [->, gray] (A) -- (AA) node[midway, anchor=south, xshift=7mm]{$\color{gray} \mathrm{D}$};
\draw [->, gray] (B) -- (BB) node[midway, anchor=south]{$\color{gray} \mathrm{D}$};
\draw [->, gray] (CC) -- (C) node[midway, anchor=south]{$\color{gray} \mathrm{D}^*$};
\draw [-, gray] (5,-1) -- (4.1,-1);
\draw [->, gray] (3.9,-1) -- (2.8,-1) node[midway,anchor=south, xshift=1mm]{$\color{gray} \mathrm{D}^*$};

\node at (7.75,-1) (E) {$(\nabla S)(\gamma)$};
\node at (7.75,2.5) (F) {$(\nabla Si)(\gamma)$};
\node at (-2.25,2.5) (G) {$\left[\begin{array}{c}\int \mathrm{EL}_\gamma \cdot (-) \,\, \text{on paths} \\ \text{which are zero at the endpoints}\end{array}\right] = \mathrm{D}^*[(\nabla Si)(\gamma)]$};

\node at (-4.25,-3) (H) {$\mathrm{EL}_\gamma = \left[\begin{array}{c} \frac{\partial L}{\partial x_1} - \frac{d}{dt}\frac{\partial L}{\partial \dot{x}_1} \\ \vdots \\ \frac{\partial L}{\partial x_N} - \frac{d}{dt}\frac{\partial L}{\partial \dot{x}_N} \end{array}\right]$};
\node at (-2.25,-1) (I) {$\int \mathrm{EL}_\gamma \cdot (-) \,\, \text{on all paths}$};

\draw [|->] (E) -- (F);
\draw [|->] (F) -- (G);
\draw [|->] (H) -- (I);
\draw [|->] (I) -- (G);
\end{tikzpicture}
\end{center}
In this sense, the Euler-Lagrange path ``is'' the gradient of the action functional on $\mathscr{P}_{\mathbf{p}, \mathbf{q}}$. Moreover, $(\nabla Si)(\gamma)$ is zero exactly when $\emph{EL}_\gamma$ is zero; thus, in solving the Euler-Lagrange equations, one is precisely solving for the zeros of the gradient of the action functional.
\end{Theorem}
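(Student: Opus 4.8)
The plan is to reduce the commutativity of the diagram to the classical first-variation computation of the calculus of variations, and then to read off the ``moreover'' clause from the injectivity statements already at hand. The genuinely nontrivial content is a single integration-by-parts step, which is exactly what produces the Euler--Lagrange terms; everything else is careful bookkeeping with the definitions. Throughout, the edge $(\nabla S)(\gamma) \mapsto (\nabla Si)(\gamma)$ is pure functoriality: since $Si = S \circ i$, the chain rule for pushforwards gives $(\nabla Si)(\gamma) = (\mathrm{T}_\gamma i)^*[(\nabla S)(\gamma)]$, so I would treat the right-hand column and concentrate on identifying $\mathrm{D}^*[(\nabla Si)(\gamma)]$.

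First I would unwind the composite $\mathrm{D}^*[(\nabla Si)(\gamma)]$ acting on an arbitrary $\eta \in \mathscr{P}_{\mathbf{0},\mathbf{0}}$. By definition the gradient is the pushforward $(Si)_* \colon \mathrm{T}_\gamma\mathscr{P}_{\mathbf{p},\mathbf{q}} \to \mathrm{T}_{Si(\gamma)}\R \cong \R$, where the identification sends a derivation $w$ to $w([\mathrm{id}])$. Using $\mathrm{D}^* = (-)\circ\mathrm{D}$, the pushforward formula $(Si)_*(v)([f]) = v([f\circ Si])$, and the defining formula for $\mathrm{D}$ from Theorem \ref{thm:tangent_spaces_to_path_spaces}, I would obtain
\[
\mathrm{D}^*[(\nabla Si)(\gamma)](\eta) = (\mathrm{D}\eta)([Si]) = \frac{d}{dh}\Big\vert_{h=0} S(\gamma + h\eta),
\]
where the last step is legitimate because $\eta$ vanishes at the endpoints, so $\gamma + h\eta$ remains in $\mathscr{P}_{\mathbf{p},\mathbf{q}}$ and $S$ agrees with $Si$ along this curve.

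Next I would perform the first-variation calculation. Differentiating under the integral sign, which is justified by smoothness of $L$ together with compactness of $[a,b]$ so that the Leibniz rule applies, yields
\[
\frac{d}{dh}\Big\vert_{h=0} S(\gamma + h\eta) = \int_a^b \sum_{i=1}^N \left(\frac{\partial L}{\partial x_i}\,\eta_i + \frac{\partial L}{\partial \dot{x}_i}\,\eta_i'\right)dt,
\]
with all partials evaluated along $(\gamma(t),\gamma'(t),t)$. Integrating the second summand by parts and discarding the boundary term $\left[\frac{\partial L}{\partial \dot{x}_i}\,\eta_i\right]_a^b$, which vanishes since $\eta_i(a) = \eta_i(b) = 0$, collapses the right-hand side to $\int_a^b \mathrm{EL}_\gamma \cdot \eta \, dt$. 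This is precisely the functional reached along the left-hand column, since $\sharp(\mathrm{EL}_\gamma) = \int \mathrm{EL}_\gamma \cdot (-)$ and $i_0^*$ merely restricts it to paths vanishing at the endpoints; commutativity of the diagram then follows.

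Finally, for the ``moreover'' clause I would argue by injectivity at both stages. Because $\mathrm{D}$ is an isomorphism by Theorem \ref{thm:tangent_spaces_to_path_spaces}, so is $\mathrm{D}^*$; hence $(\nabla Si)(\gamma) = 0$ if and only if its image $\mathrm{D}^*[(\nabla Si)(\gamma)] = \int \mathrm{EL}_\gamma \cdot (-)$ vanishes on all of $\mathscr{P}_{\mathbf{0},\mathbf{0}}$. By Remark \ref{rmk:i0star_after_sharp_injective}, that is the injectivity of $i_0^*\circ\sharp$ (the fundamental lemma of the calculus of variations), this holds exactly when $\mathrm{EL}_\gamma = 0$, which is the claim. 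The main obstacle is not any single deep step but keeping the two reductions clean: translating the abstract pushforward-then-$\mathrm{D}^*$ composite into the concrete derivative $\frac{d}{dh}\big\vert_{h=0} S(\gamma + h\eta)$ without identification or sign errors, and ensuring the hypotheses for differentiating under the integral and for the vanishing of the boundary term (namely that $\eta$ lies in $\mathscr{P}_{\mathbf{0},\mathbf{0}}$) are genuinely in force.
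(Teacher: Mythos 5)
Your proposal is correct and follows essentially the same route as the paper's own proof: unwinding $\mathrm{D}^*[(\nabla Si)(\gamma)](\eta)$ via the definitions of gradient, pushforward and $\mathrm{D}$ into the derivative $\frac{d}{dh}\big\vert_{h=0}S(\gamma+h\eta)$, performing the standard first-variation computation with integration by parts and vanishing boundary terms, and deducing the ``moreover'' clause from the isomorphism $\mathrm{D}^*$ of Theorem \ref{thm:tangent_spaces_to_path_spaces} together with the injectivity of $i_0^*\circ\sharp$ from Remark \ref{rmk:i0star_after_sharp_injective}. The only cosmetic difference is that you make the chain-rule step $(\nabla Si)(\gamma) = (\mathrm{T}_\gamma i)^*[(\nabla S)(\gamma)]$ and the justification that $S$ agrees with $Si$ along $\gamma+h\eta$ explicit, where the paper folds these into its chain of equalities.
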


\begin{proof}
By construction, for $\alpha \in \mathscr{P}_{\mathbf{0}, \mathbf{0}}$, we have:
\begin{align*}
\mathrm{D}^*[(\nabla Si)(\gamma)](\alpha) &= [(\nabla Si)(\gamma)](\mathrm{D}\alpha) \\
&= [(\nabla S)(\gamma)](\mathrm{T}_{\gamma}i(\mathrm{D}\alpha)) \\
&= (\mathrm{T}_{\gamma}i(\mathrm{D}\alpha))(S) \\
&= (\mathrm{D}\alpha)(Si) \\
&= \frac{d}{dh}\Big\vert_{h=0}\int_a^b L(\gamma(t) + h\alpha(t), \gamma'(t) + h\alpha'(t), t)\,dt
\end{align*}
For the derivative, by differentiating under the integral sign and integrating by parts, we have:
\begin{align*}
\frac{d}{dh}\int_a^b L(\gamma(t) + h\alpha(t), \gamma'(t) + h\alpha'(t), t)\,dt &= \sum_{i=1}^N\left(\int_a^b \frac{\partial L}{\partial x_i}\alpha_i(t) + \frac{\partial L}{\partial \dot{x}_i}\alpha_i'(t)\,dt\right) \\
&= \sum_{i=1}^N\int_a^b \frac{\partial L}{\partial x_i}\alpha_i(t)\,dt + \sum_{i=1}^N\int_a^b\frac{\partial L}{\partial \dot{x}_i}\alpha_i'(t)\,dt \\
&= \sum_{i=1}^N\int_a^b \frac{\partial L}{\partial x_i}\alpha_i(t)\,dt \\
&\hspace{10mm} + \sum_{i=1}^N\left(-\int_a^b\frac{d}{dt}\frac{\partial L}{\partial \dot{x}_i}\alpha_i(t)\,dt + \underbrace{\left[\frac{\partial L}{\partial \dot{x}_i}\alpha_i(t)\right]_a^b}_{\alpha_i(a) = \alpha_i(b) = 0}\right) \\
&= \sum_{i=1}^N\int_a^b\left(\frac{\partial L}{\partial x_i} - \frac{d}{dt}\frac{\partial L}{\partial \dot{x}_i}\right)\alpha_i(t)\,dt
\end{align*}
Here, in the final sum, the terms $\frac{\partial L}{\partial x_i}$ and $\frac{\partial L}{\partial \dot{x}_i}$ are evaluated at $(\gamma + h\alpha, \gamma' + h\alpha', t)$; and so upon evaluation of the derivative at $h=0$, we find that
\[
\mathrm{D}^*[(\nabla Si)(\gamma)](\alpha) = \int_a^b \mathrm{EL}_\gamma(t) \cdot \alpha(t)\,dt = \langle \mathrm{EL}_\gamma, \alpha \rangle
\]
as desired. Finally, to see $(\nabla Si)(\gamma)$ is zero exactly when $\mathrm{EL}_\gamma$ is zero, note that $\mathrm{D}^*$, on $(\mathrm{T}_\gamma\mathscr{P}_{\mathbf{p}, \mathbf{q}})^*$, is an isomorphism by Theorem \ref{thm:tangent_spaces_to_path_spaces} and $i_0^* \circ \sharp$, on $\mathscr{P}$, is injective as in Remark \ref{rmk:i0star_after_sharp_injective}. 
\end{proof}

\begin{Remark}
The calculation in the proof above for $\frac{d}{dh}\int_a^b L(\gamma(t) + h\alpha(t), \gamma'(t) + h\alpha'(t), t)\,dt$ is a standard one in the calculus of variations, but here its final outcome appears in a new light. We see that the resulting Euler-Lagrange equations are the components of a vector $\mathrm{EL}_\gamma$ tangent to the path space $\mathscr{P}$, and that this vector is precisely the gradient of the action functional $Si$.
\end{Remark}

Armed with the geometric interpretation of the Euler-Lagrange equations as the gradient of the action functional, we may also similarly interpret the constrained Euler-Lagrange equations. Recall first the Lagrange multiplier theorem from ordinary multivariable calculus. This says that, given a smooth function $f \colon \R^N \to \R$, another smooth function $g \colon \R^N \to \R$, and a regular value $c$ of $g$, for $\mathbf{x}$ to be a stationary point of the restriction $T \to \R^N \to \R$, where $T$ is the level set of $g$ at $c$, it is necessary and sufficient for $(\nabla f)(\mathbf{x})$ to be parallel to $(\nabla g)(\mathbf{x})$. One can interpret this geometrically as follows: if $\nabla f$ has a non-zero component in a direction parallel to $T$ (this is equivalent to $\nabla f$ not being parallel to $\nabla g$), then there is a direction, parallel to $T$, in which we can perturb the input and increase or decrease the value of the output. \\

We can consider an analogous problem for $\mathscr{P}_{\mathbf{p}, \mathbf{q}}$. In fact, we can consider two different kinds of constraints in this case:
\begin{itemize}
	\item[(i)] Given a smooth function $g \colon \R^N \to \R$ and a regular value $c$ of $g$, we wish to locally minimize or maximize the action functional $Si$ of Theorem \ref{thm:euler_lagrange_equations_as_gradients} subject to the constraint the $\gamma$ must map into $T$, the level set of $g$ at $c$. Note that this is not quite the same as the finite dimensional problem above, in that we are not using the level set of a smooth function out of $\mathscr{P}_{\mathbf{p}, \mathbf{q}}$.
	\item[(ii)] Given a smooth function $M \colon \R^{2N+1} \to \R$, locally minimize or maximize the action function $Si$ of Theorem \ref{thm:euler_lagrange_equations_as_gradients} subject to the constraint that $\int_a^b M(\gamma(t), \gamma'(t), t)\,dt = c$ for some constant $c$. In this case the constraint function is indeed one on the path space $\mathscr{P}_{\mathbf{p}, \mathbf{q}}$.
\end{itemize}
For (i), as is standard (see \cite{GelfandFomin}), by considering perturbations of the form $\gamma + h\eta$, one finds that, if $Si$ is stationary on $T$ at $\gamma$, then there exists some smooth $\lambda \colon [a,b] \to \R$ such that:
\begin{align*}
\frac{\partial L}{\partial x_1} + \lambda\frac{\partial g}{\partial x_1} &= \frac{d}{dt}\frac{\partial L}{\partial \dot{x}_1} \\
&\vdots \\
\frac{\partial L}{\partial x_N} + \lambda\frac{\partial g}{\partial x_N} &= \frac{d}{dt}\frac{\partial L}{\partial \dot{x}_N}
\end{align*}
With our terminology from Theorem \ref{thm:euler_lagrange_equations_as_gradients} above, we see that this amounts exactly to that $\mathrm{EL}_\gamma$, which we can think of as $(\nabla Si)(\gamma)$, is parallel to $(\nabla g) \circ \gamma$, for each $t \in [a,b]$, with a smoothly varying constant of proportionality. We can interpret this geometrically in much the same manner as we did for the finite dimensional problem above: if the gradient of the functional has a non-zero component parallel to $T$, at any point along the input path, we can increase or decrease the action by perturbing the path, while remaining within $T$, at the corresponding value of $t \in [a,b]$. \\

On the other hand, for (ii), again, as is standard (see \cite{GelfandFomin}), one finds that the stationary points are found by solving the equations, for $\lambda \in \R$:
\begin{align*}
\frac{\partial L}{\partial x_1} - \frac{d}{dt}\frac{\partial L}{\partial \dot{x}_1} + &\lambda\left(\frac{\partial M}{\partial x_1} - \frac{d}{dt}\frac{\partial M}{\partial \dot{x}_1}\right) = 0 \\
&\vdots \\
\frac{\partial L}{\partial x_N} - \frac{d}{dt}\frac{\partial L}{\partial \dot{x}_N} + &\lambda\left(\frac{\partial M}{\partial x_N} - \frac{d}{dt}\frac{\partial M}{\partial \dot{x}_N}\right) = 0
\end{align*}
With our terminology from Theorem \ref{thm:euler_lagrange_equations_as_gradients} above, we see that this amounts exactly to that $\mathrm{EL}_\gamma^L$, the Euler-Lagrange path corresponding to $L$, which we can think of as $(\nabla Si)(\gamma)$, is parallel to $\mathrm{EL}_\gamma^M$, the Euler-Lagrange path corresponding to $M$, which we can think of as the gradient of the constraint function.

\section{Examples from Geometry, Mechanics and Machine Learning}

We now provide several examples from various fields to illustrate the general theory above. These examples show clearly that the Euler-Lagrange paths behave, with respect to stationary paths, in exactly the way that gradients do in ordinary multivariable calculus with respect to stationary points. For example, if there is a local minimum at a path $\gamma$, then $-\mathrm{EL}_\gamma$, representing the negative of the gradient, always points toward $\gamma$.

\begin{Example}[\textbf{Euclidean geometry}]
In the Euclidean plane $\E^2$, a path $\gamma(t) = (x(t), y(t))$, for $t \in [a,b]$, is a geodesic exactly when
\[
\ell_{\text{euc}}(\gamma) = \int_a^b \sqrt{x'(t)^2 + y'(t)^2}\,dt
\]
is minimal. Let us take $a = 0$ and $b = 1$ and let us take the initial and final points to be $(0,0)$ and $(1,0)$, respectively. The length-minimizing path, the geodesic, is then of course $\gamma_0(t) = (t,0)$. As $\gamma_0$ is a minimizing path, at other paths, we expect the negative gradient $-\nabla \ell_{\text{euc}}$, represented by the Euler-Lagrange path, to point toward $\gamma_0$. Consider the paths $\gamma_1(t) = (t, t(1-t))$ and $\gamma_2(t) = (t, t(t-1))$. The corresponding Euler-Lagrange paths are easily calculated to be:
\[
\mathrm{EL}_{\gamma_1}(t) = \left( \frac{2(2t-1)}{(1+(1-2t)^2)^{3/2}}, \frac{2}{(1+(1-2t)^2)^{3/2}} \right)
\]
\[
\mathrm{EL}_{\gamma_2}(t) = \left( \frac{2(2t-1)}{(1+(1-2t)^2)^{3/2}}, \frac{-2}{(1+(1-2t)^2)^{3/2}} \right)
\]
As in Figure \ref{fig:euc_geom}, we see that $-\mathrm{EL}_{\gamma_1}$ and $-\mathrm{EL}_{\gamma_2}$ point in the expected directions, toward the minimizing path.
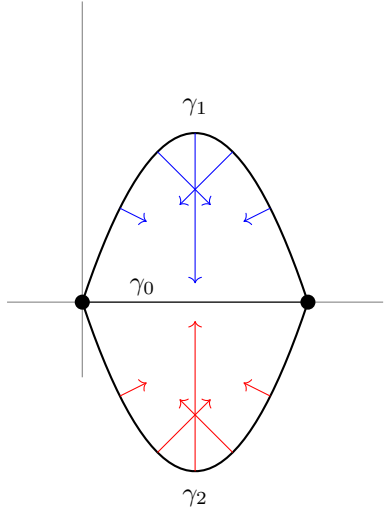
\begin{figure}[h]
\begin{center}
\begin{tikzpicture}
\draw [-, gray] (0,-1) -- (0,4);
\draw [-, gray] (-1,0) -- (4,0);

\node[circle, minimum size=2mm, inner sep=0pt, fill=black] (A) at (0,0) {};
\node[circle, minimum size=2mm, inner sep=0pt, fill=black] (A) at (3,0) {};

\draw [black, thick,  domain=0:3, samples=40] plot ({\x}, {\x*(3-\x)} );
\draw [black, thick,  domain=0:3, samples=40] plot ({\x}, {\x*(\x-3)} );
\draw [-] (0,0) -- (3,0);

\node at (1.5,2.6) (A) {$\gamma_1$};
\node at (1.5,-2.6) (B) {$\gamma_2$};
\node at (0.8, 0.2) (C) {$\gamma_0$};

\draw [->, blue] ( 0.5 , 1.25 ) -- ( 0.8577708763999663 , 1.0711145618000169 );
\draw [->, blue] ( 1 , 2 ) -- ( 1.7071067811865475 , 1.2928932188134525 );
\draw [->, blue] ( 1.5 , 2.25 ) -- ( 1.5 , 0.25 );
\draw [->, blue] ( 2 , 2 ) -- ( 1.2928932188134525 , 1.2928932188134525 );
\draw [->, blue] ( 2.5 , 1.25 ) -- ( 2.1422291236000337 , 1.0711145618000169 );

\draw [->, red] ( 0.5 , -1.25 ) -- ( 0.8577708763999663 , -1.0711145618000169 );
\draw [->, red] ( 1 , -2 ) -- ( 1.7071067811865475 , -1.2928932188134525 );
\draw [->, red] ( 1.5 , -2.25 ) -- ( 1.5 , -0.25 );
\draw [->, red] ( 2 , -2 ) -- ( 1.2928932188134525 , -1.2928932188134525 );
\draw [->, red] ( 2.5 , -1.25 ) -- ( 2.1422291236000337 , -1.0711145618000169 );
\end{tikzpicture}
\end{center}
\caption{The blue vectors are the values of $-\mathrm{EL}_{\gamma_1}(t)$; the red vectors are the values of $-\mathrm{EL}_{\gamma_2}(t)$.}
\label{fig:euc_geom}
\end{figure}
\end{Example}

\begin{Example}[\textbf{Hyperbolic geometry}]
In the hyperbolic plane $\mathbb{H}^2 = \{(x,y) \mid y > 0\}$, taken to be the upper half-plane, a path $\gamma(t) = (x(t), y(t))$, for $t \in [a,b]$, is a geodesic exactly when
\[
\ell_{\text{hyp}}(\gamma) = \int_a^b \frac{\sqrt{x'(t)^2 + y'(t)^2}}{y(t)}\,dt
\]
is minimal. Let us take $a = \frac{\pi}{4}$ and $b = \frac{3\pi}{4}$ and let us take the initial and final points to be $(-1, 1)$ and $(1, 1)$, respectively. In the hyperbolic plane, the geodesics are the vertical lines and the circular arcs centred on the horizontal axis. Thus, the minimizing path in the case at hand is $\gamma_0(t) = (\sqrt{2}\cos(t), \sqrt{2}\sin(t))$. As $\gamma_0$ is a minimizing path, at other paths, we expect the negative gradient $-\nabla \ell_{\text{hyp}}$, represented by the Euler-Lagrange path, to point toward $\gamma_0$. Consider the straight-line path $\gamma_1(t) = (\frac{4t}{\pi}-2, 1)$. The corresponding Euler-Lagrange path is as follows:
\[
\mathrm{EL}_{\gamma_1}(t) = \left( 0, -\frac{4}{\pi} \right)
\]
As in Figure \ref{fig:hyp_geom}, we see that $-\mathrm{EL}_{\gamma_1}$ points in the expected direction, toward the minimizing path.
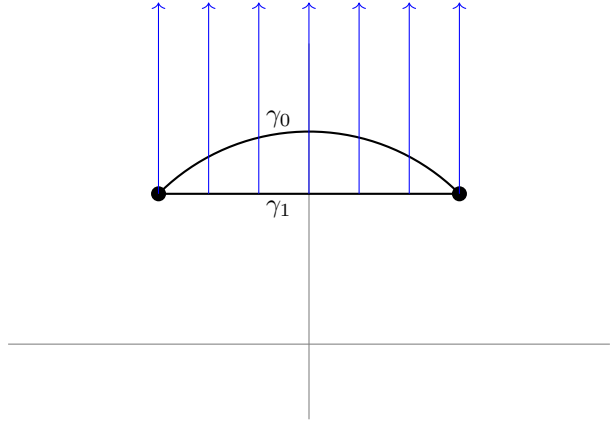
\begin{figure}[h]
\begin{center}
\begin{tikzpicture}
\draw [-, gray] (0,-1) -- (0,4);
\draw [-, gray] (-4,0) -- (4,0);

\node[circle, minimum size=2mm, inner sep=0pt, fill=black] (A) at (-2,2) {};
\node[circle, minimum size=2mm, inner sep=0pt, fill=black] (A) at (2,2) {};

\draw [black, thick,  domain=0.785:2.356, samples=40] plot ({2*1.4142*cos(deg(\x))}, {2*1.4142*sin(deg(\x))} );
\draw [, thick,  domain=0.785:2.356, samples=40] plot ({2*4*\x/3.14159 - 2*2}, {2*1} );

\node at (-0.4,3) (A) {$\gamma_0$};
\node at (-0.4,1.8) (B) {$\gamma_1$};

\draw [->, blue] ( -2.001012226293055 , 2 ) -- ( -2.001012226293055 , 4.546481240391012 );
\draw [->, blue] ( -1.3342585548506753 , 2 ) -- ( -1.3342585548506753 , 4.546481240391012 );
\draw [->, blue] ( -0.6675048834082955 , 2 ) -- ( -0.6675048834082955 , 4.546481240391012 );
\draw [->, blue] ( -0.0007512119659152106 , 2 ) -- ( -0.0007512119659152106 , 4.546481240391012 );
\draw [->, blue] ( 0.6660024594764646 , 2 ) -- ( 0.6660024594764646 , 4.546481240391012 );
\draw [->, blue] ( 1.3327561309188436 , 2 ) -- ( 1.3327561309188436 , 4.546481240391012 );
\draw [->, blue] ( 1.9995098023612243 , 2 ) -- ( 1.9995098023612243 , 4.546481240391012 );
\end{tikzpicture}
\end{center}
\caption{The blue vectors are the values of $-\mathrm{EL}_{\gamma_1}(t)$.}
\label{fig:hyp_geom}
\end{figure}
\end{Example}

\begin{Example}[\textbf{Spherical geometry}]
On the sphere $\Sph^2 = \{(x,y,z) \mid x^2+ y^2 + z^2 = 1\}$, we can find geodesics by finding paths which minimize the 3-dimensional Euclidean length
\[
\int_a^b\sqrt{x'(t)^2+y'(t)^2+z'(t)^2}\,dt
\] subject to the restriction that $\gamma$ lie on $\Sph^2$. This is an example with a constraint. The constraint is that $g(x,y,z) = 1$, where $g(x,y,z) = x^2+y^2+z^2$. An example geodesic is $\gamma(t) = (0, \cos(t), \sin(t))$ for $t \in [0, \pi]$, part of the great circle contained in the $yz$-plane. We find that:
\[
\mathrm{EL}_\gamma(t) = (0,\cos(t),\sin(t))
\]
\[
(\nabla g \circ \gamma)(t) = (0, 2\cos(t), 2\sin(t))
\]
Clearly, for each $t \in [0, \pi]$, $\mathrm{EL}_\gamma(t)$ and $(\nabla g \circ \gamma)(t)$ are parallel, and perpendicular to the sphere, as expected. 
\end{Example}

\begin{Example}[\textbf{An isoperimetric problem}]
Consider the set of curves $\gamma \colon [0, 2\pi] \to \mathbb{E}^2 : t \mapsto (x(t), y(t))$ which start and end at the origin, and are of length $2\pi$. The loops which maximize the area enclosed by the loop are, by Green's theorem, those which maximize
\[
\int_0^{2\pi} \frac{1}{2}(x(t)y'(t)-x'(t)y(t))\,dt
\]
which is an action with Lagrangian $L(x,y,\dot{x},\dot{y},t) = \frac{1}{2}(x\dot{y}-\dot{x}y)$.
Moreover, the length constraint amounts to
\[
\int_0^{2\pi} \sqrt{x'(t)^2 + y'(t)^2}\,dt = 1
\]
and the lefthand side here is an action with Lagrangian $M(x,y,\dot{x},\dot{y},t) = \sqrt{\dot{x}^2+\dot{y}^2}$. As is well-known, the loops which (locally) maximize the enclosed area are exactly the circular loops. For example, $\gamma(t) = (1-\cos(t), \sin(t))$ is one such loop. We find that:
\[
\mathrm{EL}_\gamma^L(t) = (\cos(t), -\sin(t))
\]
\[
\mathrm{EL}_\gamma^M(t) = (-\cos(t), \sin(t))
\]
As expected, $\mathrm{EL}_\gamma^L(t)$ and $\mathrm{EL}_\gamma^M(t)$ are parallel for each $t \in [0, 2\pi]$.
\end{Example}

\begin{Example}[\textbf{Projectiles}]
Consider a particle of unit mass moving in the plane with Lagrangian $L = \frac{1}{2}(\dot{x}^2 + \dot{y}^2) - gy$. This represents a projectile moving under gravity. For a path $\gamma(t) = (x(t), y(t))$, the action is:
\[
S_{\text{proj}}(\gamma) = \int_a^b \frac{1}{2}(x'(t)^2 + y'(t)^2) - gy(t)\,dt
\]
For $a = 0$, $b = 3$, and initial and final points $(0,0)$ and $(3,0$), respectively, we find that the minimizing path is $\gamma_0(t) = (3t, -\frac{1}{2}gt(t-1))$. As $\gamma_0$ is a minimizing path, at other paths, we expect the negative gradient $-\nabla S_{\text{proj}}$, represented by the Euler-Lagrange path, to point toward $\gamma_0$. Consider the path $\gamma_1(t) = (3t, -gt(t-1))$. We find that:
\[
\mathrm{EL}_{\gamma_1}(t) = (0, g)
\]
As in Figure \ref{fig:projectile}, we see that $-\mathrm{EL}_{\gamma_1}$ points in the expected direction, toward the minimizing path (the vectors $-\mathrm{EL}_{\gamma_1}(t)$ have been scaled down by a factor of ten, to produce a reasonable diagram).
\begin{figure}[h]
\begin{center}
\begin{tikzpicture}
\draw [-, gray] (0,-1) -- (0,4);
\draw [-, gray] (-1,0) -- (4,0);

\node[circle, minimum size=2mm, inner sep=0pt, fill=black] (A) at (0,0) {};
\node[circle, minimum size=2mm, inner sep=0pt, fill=black] (A) at (3,0) {};

\draw [black, thick,  domain=0:1, samples=40] plot ({3*\x}, {-0.5*9.8*\x*(\x-1)} );
\draw [black, thick,  domain=0:1, samples=40] plot ({3*\x}, {-9.8*\x*(\x-1)} );

\node at (1.5,2.7) (A) {$\gamma_1$};
\node at (1.5, 0.9) (C) {$\gamma_0$};

\draw [->, blue] ( 0.5 , 1.3611111111111112 ) -- ( 0.5 , 0.38111111111111107 );
\draw [->, blue] ( 1.0 , 2.177777777777778 ) -- ( 1.0 , 1.197777777777778 );
\draw [->, blue] ( 1.5 , 2.45 ) -- ( 1.5 , 1.4700000000000002 );
\draw [->, blue] ( 2.0 , 2.177777777777778 ) -- ( 2.0 , 1.197777777777778 );
\draw [->, blue] ( 2.5 , 1.361111111111111 ) -- ( 2.5 , 0.38111111111111085 );
\end{tikzpicture}
\end{center}
\caption{The blue vectors are the values of $-\mathrm{EL}_{\gamma_1}(t)$.}
\label{fig:projectile}
\end{figure}
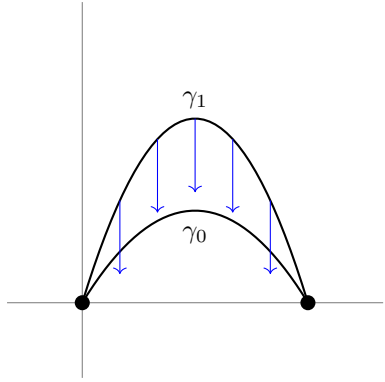
\end{Example}
 
\begin{Example}[\textbf{Harmonic oscillators}]
Consider a particle of unit mass moving in one dimension with Lagrangian $L = \frac{1}{2}\dot{x}^2 - \frac{1}{2}x^2$. This represents a harmonic oscillator. For a path $\gamma(t) = x(t)$, the action is:
\[
S_{\text{osc}}(\gamma) = \int_a^b \frac{1}{2}x'(t)^2 - \frac{1}{2}x(t)^2\,dt
\]
For $a = 0$, $b = \pi$, and initial and final points $-1$ and $1$, respectively, we find that the minimizing path is $\gamma_0(t) = -\cos(t)$. As $\gamma_0$ is a minimizing path, at other paths, we expect the negative gradient $-\nabla S_{\text{osc}}$, represented by the Euler-Lagrange path, to point toward $\gamma_0$. Consider the constant-velocity path $\gamma_1(t) = \frac{2}{\pi}t-1$. We find that:
\[
\mathrm{EL}_{\gamma_1}(t) = -\frac{2}{\pi}t+1
\]
Thus, when $t < \frac{\pi}{2}$, $-\mathrm{EL}_{\gamma_1}(t) < 0$, whereas when $t > \frac{\pi}{2}$, $-\mathrm{EL}_{\gamma_1}(t) > 0$. This, combined with Figure \ref{fig:harmonic_oscillator}, illustrates that $-\mathrm{EL}_{\gamma_1}$ points in the expected direction, toward the minimizing path.
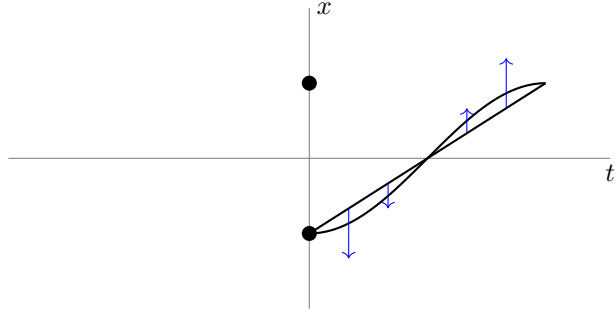
\begin{figure}[h]
\begin{center}
\begin{tikzpicture}
\draw [-, gray] (0,-2) -- (0,2);
\draw [-, gray] (-4,0) -- (4,0);

\node[circle, minimum size=2mm, inner sep=0pt, fill=black] (A) at (0,-1) {};
\node[circle, minimum size=2mm, inner sep=0pt, fill=black] (A) at (0,1) {};

\draw [black, thick,  domain=0:3.14159, samples=40] plot ({\x}, {-cos(deg(\x)} );
\draw [black, thick,  domain=0:3.14159, samples=40] plot ({\x}, {(2/3.14159)*\x-1)} );

\node at (4,-0.2) (A) {$t$};
\node at (0.2, 2) (C) {$x$};

\draw [->, blue] ( 0.5235983333333333 , -0.6666666666666667 ) -- ( 0.5235983333333333 , -1.3333333333333335 );
\draw [->, blue] ( 1.0471966666666666 , -0.33333333333333337 ) -- ( 1.0471966666666666 , -0.6666666666666667 );
\draw [->, blue] ( 2.094393333333333 , 0.33333333333333326 ) -- ( 2.094393333333333 , 0.6666666666666665 );
\draw [->, blue] ( 2.6179916666666667 , 0.6666666666666667 ) -- ( 2.6179916666666667 , 1.3333333333333335 );
\end{tikzpicture}
\end{center}
\caption{The blue vectors are the values of $-\mathrm{EL}_{\gamma_1}(t)$.}
\label{fig:harmonic_oscillator}
\end{figure}
\end{Example}

\begin{Example}[\textbf{Minimizing the loss function in a statistical learning problem}]
As a final example, we consider a problem from statistical learning. Let $X$ and $Y$ be two random variables in $[0,1]$, where we think of $X$ as an input and $Y$ as a corresponding output. The statistical relation between the input $X$ and output $Y$ can be summarized by a joint probability density function $p \colon [0,1]^2 \to \R$. This density $p$ is unknown to us. Suppose that we model the relation between $X$ and $Y$ by a function $f \colon [0,1] \to [0,1]$. To measure the accuracy of this model, we introduce a loss function $R \colon [0,1]^2 \to [0,1]$. For the purposes of this example, let us take the squared loss $R(\widehat y,y) = (\widehat y-y)^2$ (here, we think of $\widehat y$ as our prediction for $Y$ and $y$ as the true value of $Y$). Now, for our model $f$, the expected value of the loss $R(f)$ is:
\[
\mathbb{E}[R(f)] = \int_0^1\int_0^1 (f(x)-y)^2p(x,y)\,dx\,dy
\]
This is an action (of $f$) with Lagrangian:
\[
L(q,\dot{q},x) = \int_0^1 (q-y)^2p(x,y)\,dy
\]
For the corresponding Euler-Lagrange path, we find that:
\begin{align*}
\mathrm{EL}_f(x) &= \frac{\partial}{\partial q}\int_0^1 (q-y)^2p(x,y)\,dy \\
&= 2\int_0^1 (f(x)-y)p(x,y)\,dy \\
&= 2f(x)\int_0^1p(x,y)\,dy - 2\int_0^1yp(x,y)\,dy
\end{align*}
Thus, we get the intuitive result that, to minimize the expected loss, we should choose the model:
\[
f(x) = \frac{\int_0^1yp(x,y)\,dy}{\int_0^1p(x,y)\,dy} = \mathbb{E}[Y | X = x]
\]
For example, if $p \equiv 1$, which is to say if both $X$ and $Y$ are uniformly randomly distributed, then the optimal model is simply $f \equiv \frac{1}{2}$. Moreover, we see that $-\mathrm{EL}_f(x)$ is positive if $f(x)$ is smaller than the conditional expected value $\mathbb{E}[Y | X = x]$ of $Y$, and is negative if $f$ is larger than this expected value. Thus, once more, $-\mathrm{EL}_f$, being a representative for $-\nabla \mathbb{E}[R(-)]$, points in the expected direction, that of minimization.
\end{Example}

\end{document}